\newcommand{\R}{\mathbb{R}}
\newcommand{\Or}{\mathrm{O}}
\newcommand{\D}{\partial}
\newcommand{\X}{\times} 
\renewcommand{\t}[1]{\widetilde{#1}}
\newcommand{\rank}{\mathop\mathrm{rank}\nolimits}
\newcommand{\im}{\mathop\mathrm{im}\nolimits}
\newcommand{\T}{\downvdash}
\newcommand{\F}{\mathcal{F}}
\newcommand{\dif}{\mathrm{\,d}}
\newcommand{\f}{\mathfrak{f}}
\newcommand{\g}{\mathfrak{g}}
\newcommand{\h}{\mathfrak{h}}
\newcommand{\xbf}{\boldsymbol{\mathrm{x}}}
\newtheorem{theorem}{Theorem}[section]
\newtheorem{corollary}[theorem]{Corollary}
\newtheorem{lemma}[theorem]{Lemma}
\newtheorem{example}[theorem]{Example}
\newtheorem{proposition}[theorem]{Proposition}
\newtheorem{remark}[theorem]{Remark}
\numberwithin{equation}{section}
\title[Periodic solutions of semi-explicit DAEs with
time-dependent\ldots ] {Periodic solutions of semi-explicit
  differential-algebraic equations with time-dependent constraints}
\author{Luca Bisconti, Alessandro Calamai and Marco Spadini}
\address[L.\ Bisconti and M.\ Spadini]{Dipartimento di Matematica e
  Informatica, Universit\`a di Firenze, Via S.\ Marta 3, 50139
  Firenze, Italy} \address[A. Calamai]{Dipartimento di Ingegneria
  Industriale e Scienze Matematiche, Universit\`a Politecnica delle
  Marche, Via Brecce Bianche, 60131 Ancona, Italy}
\begin{document}

\begin{abstract} In this paper we investigate the properties of the
  set of $T$-periodic solutions of semi-explicit parametrized
  Differential-Algebraic Equations with non-autonomous constraints of
  a particular type.  We provide simple, degree-theoretic conditions
  for the existence of branches of $T$-periodic solutions of the
  considered equations.  Our approach is based on topological
  arguments about differential equations on implicitly defined
  manifolds, combined with elementary facts of matrix analysis.
\end{abstract}
\maketitle

\noindent
\emph{\footnotesize 2000 Mathematics Subject Classification:}
{\scriptsize 34A09; 34C25; 34C40}.\\
\emph{\footnotesize Key words:} {\scriptsize Differential-algebraic
  equations, periodic solutions, ordinary differential equations on
  manifolds}.

\section{Introduction}
Several mathematical models arising from physical and engineering
problems can be described in terms of differential-algebraic equations
(DAEs). Because of this, in recent years, there has been a lot of
interest on these equations from both the point of view of the pure
and applied mathematicians. Beside the more genuinely modelistic or
numerical approaches, there are many books and papers that treat DAEs
from an analytical perspective. Of all those, in order to avoid an
impossibly long and necessarily incomplete list, we only mention
\cite{dae, RR0, R:1988} and references therein.

\smallskip A relevant case is represented by first order
\emph{semi-explicit DAEs in Hessenberg form} (see, e.g., \cite{dae})
that is:
\begin{equation} \label{eq:Hessenberg} \left\{ \begin{array}{l}
      \dot x = f (t, x, y), \\
      \mathcal{G} (t, x, y) =0,
    \end{array} \right.
\end{equation}
where $f \colon\R\times \R^m\times\R^s\to\R^m$ is a continuous map,
and $\mathcal{G}\colon\R\times\R^m\times\R^s\to\R^s$ is sufficiently
smooth. If we assume that the partial derivative, $\D_3\mathcal{G}$,
of $\mathcal{G}$ with respect to the third variable $y$ is invertible,
then \eqref{eq:Hessenberg} is said to be of \emph{index 1}.

In this paper we are concerned with a parametrized special case of
\eqref{eq:Hessenberg}.  In fact, we assume that the constraint
$\mathcal{G}$ has the form
\[
\mathcal{G}(t,x,y)=g\big(A(t)x, B(t)y\big)
\]
where $g\colon \R^m\times\R^s\to\R^s$ is $C^\infty$, and the
square-matrix-valued maps $A\colon\R\to\mathrm{O}(\R^m)$ and
$B\colon\R\to\mathrm{GL}(\R^s)$ are continuous.  Here
$\mathrm{O}(\R^m)$ denotes the group of orthogonal $m\X m$ matrices
and $\mathrm{GL}(\R^s)$ the group of $s\X s$ invertible ones.

Namely, for $\lambda\geq 0$ we consider parametrized DAEs of the
following form
\begin{equation} \label{eq:Hessenberg-perturbed}
  \left\{ \begin{array}{l}
      \dot x =  \lambda f(t, x, y), \,\, \lambda\geq 0,\\
      g\big(A(t)x, B(t)y\big) =0,
    \end{array} \right.
\end{equation}
with $f$ as in \eqref{eq:Hessenberg}, and we assume that
$\partial_2g(x,y)$ is invertible for all $(x,y)\in\R^m\times\R^s$ and
(for technical reasons) that $A$ is of class $C^1$.

We also treat, in parallel, the following parametrized second order
DAEs
\begin{equation} \label{eq:second-order-perturbed}
  \left\{ \begin{array}{l}
      \ddot x =  \lambda f(t, x, y, \dot x, \dot y), \,\, \lambda\geq 0,\\
      g\big(A(t)x, B(t)y\big) =0.
    \end{array} \right.
\end{equation}
In this case we assume the matrix-valued maps $A$ and $B$ to be of
class $C^2$ and $C^1$, respectively.  The latter type of equations, in
particular, may be used to represent some nontrivial physical systems
as, for instance, constrained systems (see e.g.\ \cite{RR0}).

\smallskip We will assume throughout the paper that the matrix-valued
function $A$ satisfies the following property:
\begin{equation}\label{propA}
  \text{$A(t)\dot A(t)^\T$ is constant $\forall t\in\R$.}
\end{equation}
This assumption might seem unnatural, but it is not so. To understand
why, consider the case when $m=3$. In that case, if $\{e_1,e_2,e_3\}$
is a fixed reference frame in $\R^3$ and $\mathcal{T}(t)=
\{A(t)e_1,A(t)e_2,A(t)e_3\}$ is a moving frame, our assumption is
equivalent to imposing that the angular velocity of $\mathcal{T}$ is
the zero vector. This is, in fact, an immediate consequence of the
definition of angular velocity. An entirely similar statement holds
for $m=2$.

Furthermore, in this paper we will always assume that, for some given
$T>0$, the map $f$ is $T$-periodic in the first variable and that $A$
and $B$ are $T$-periodic.  Following the approach of \cite{cala, CS1,
  CS2, spaDAE}, we study qualitative properties of the set of
$T$-periodic solutions of \eqref{eq:Hessenberg-perturbed} and
\eqref{eq:second-order-perturbed}. Roughly speaking, we show the
existence of an unbounded connected set of ``nontrivial'' $T$-periodic
solutions of \eqref{eq:Hessenberg-perturbed} or
\eqref{eq:second-order-perturbed} emanating from the set of its
constant solutions.  Precise statements will be given in Subsection
\ref{MainResOrder1} for first order equations and in Subsection
\ref{MainResOrder2} for second order ones.  We also show, through some
examples and remarks, how our constructions can be extended to include
several equations of different forms.

Our continuation results are in the spirit of analogous ones by Furi
and Pera for parametrized first- and second-order equations on
differentiable manifolds (for more details see the survey
\cite{fupespa}) and could be considered, in some sense, as
consequences of recent results obtained by the last two authors in
\cite{cala, CS1, CS2, spaDAE}.  However, we wish to point out the
following facts.  First of all, while the continuation results on
differentiable manifolds by Furi and Pera require the knowledge of the
degree (often called characteristic or rotation) of suitable tangent
vector fields, here (as in \cite{cala, CS1, CS2, spaDAE}) we give
conditions only in terms of the well-known Brouwer degree which is
also easier to compute explicitly.  On the other hand, in the present
paper we tackle the case time-dependent constraints (even if of a
peculiar form). In other words, our results can be regarded as
concerning ODEs on particular \emph{$T$-periodically moving} manifolds
defined implicitly.  As far as we know, the techniques of Furi and
Pera have never been applied to moving manifolds, and this novelty is
our main original contribution to the subject.


This paper is organized as follows. In Section \ref{sec:preliminary}
we collect the preliminaries needed to approach the DAEs in
\eqref{eq:Hessenberg-perturbed} and
\eqref{eq:second-order-perturbed}. In Section \ref{sec:main-results}
we give our main results and we get topological information on the set
of $T$-periodic pairs to the considered equations; examples of
applications of our methods are provided. Finally, in Section
\ref{sec:technical-lemmas}, we give the proofs of the technical
results of matrix analysis used throughout the paper.

\subsection*{Notation}
Throughout the paper, $C_T(\R^k)$ will denote the Banach space of all
the $T$-periodic continuous maps $\zeta\colon\R\to \R^k$ with the
usual supremum norm, and $C_T^1(\R^k)$ will be the Banach space of all
the $T$-periodic $C^1$ maps $\zeta\colon\R\to \R^k$ with the $C^1$
norm.

\pagebreak

\section{Preliminary results} \label{sec:preliminary}

\subsection{First order DAEs} \label{prel1}

Let us consider semi-explicit DAEs, depending on a parameter $\lambda
\geq 0$, of the following forms:

\begin{equation}\label{dae1}
  \left\{
    \begin{array}{l}
      \dot x = \f(x,y)+\lambda \h(t,x,y),\\
      \g(x,y)=0,
    \end{array}\right.
\end{equation}
and
\begin{equation}\label{dae2}
  \left\{
    \begin{array}{l}
      \dot x = \lambda \h(t,x,y),\\
      \g(x,y)=0,
    \end{array}\right.
\end{equation}
where we assume that $\f\colon\R^m\times\R^s\to\R^m$ and
$\h\colon\R\times\R^m\times\R^s\to\R^m$ are continuous maps, $\h$ is
$T$-periodic in the first variable, and
$\g\colon\R^m\times\R^s\to\R^s$ is $C^\infty$ and such that
$\partial_2\g(x,y)$ is invertible for all $(x,y)$. Notice that,
consequently, $\mathfrak{M}:=\g^{-1}(0)$ is a closed submanifold of
$\R^m\times\R^s$.  Furthermore observe that, even if \eqref{dae2} can
be considered as a particular case of \eqref{dae1} (i.e.\ with
$\f(x,y)=0$ identically), for our purposes the two equations need to
be treated separately.

By a \emph{solution} of \eqref{dae1} we mean a pair of $C^1$ functions
$x$ and $y$ defined on an interval $I$ with the property that the
following equalites hold for all $t\in I$: $\dot x(t) =
\f\big(x(t),y(t)\big) + \lambda \h\big(t,x(t),y(t)\big)$ and
$\g\big(x(t),y(t)\big)=0$.  The notion of solution of \eqref{dae2} is
analogous. Notice that one might wish to ask only the continuity of
$y$. In fact, if $x$ is $C^1$, the assumptions on $\g$ together with
the implicit function theorem imply that $y$ is $C^1$.

In this section we recall two results from \cite{cala} and
\cite{spaDAE} (see also \cite{CS1}) about the sets of \emph{$T$-pairs}
of \eqref{dae1} and of \eqref{dae2}, namely, of those pairs
$\big(\lambda; (x,y)\big)\in [0,\infty )\times C_T(\R^m\times\R^s)$
with $(x,y)$ a $T$-periodic solution of \eqref{dae1} and of
\eqref{dae2}, respectively. Recall that a $T$-pair $\big(\lambda;
(x,y)\big)$ of \eqref{dae1} or of \eqref{dae2} is said to be
\emph{trivial} if $\lambda=0$ and $(x,y)$ is constant.

For the sake of simplicity we make some conventions. We will regard
every space as its image in the following diagram of natural
inclusions
\begin{equation}
  \begin{CD}
    \R^m\times\R^s & @>>> & C_T(\R^m\times\R^s) \\
    @VVV &  & @VVV \\
    \left[ 0,\infty \right) \times \R^m\times\R^s & @>>> & \left[
      0,\infty \right) \times C_T(\R^m\times\R^s)
  \end{CD}
  \label{diag1}
\end{equation}
In particular, we will identify $\R^m\times\R^s$ with its image in
$C_T(\R^m\times\R^s)$ under the embedding which associates to any
$(p,q)\in \R^m\times\R^s$ the map $(\bar p,\bar q) \in
C_T(\R^m\times\R^s)$ constantly equal to $(p,q)$.  Moreover we will
regard $\R^m\times\R^s$ as the slice $\{0\}\times
\R^m\times\R^s\subset [0,\infty)\times \R^m\times\R^s$ and,
analogously, $C_T(\R^m\times\R^s)$ as $\{0\}\times
C_T(\R^m\times\R^s)$.  We point out that the images of the above
inclusions are closed.

For simplicity, given $\Omega\subseteq [0,\infty)\times
C_T(\R^m\times\R^s)$, we will denote by $\Omega_\#$ the set consisting
of all the constant functions $(\bar p,\bar q)$ with $\big(0;(\bar
p,\bar q)\big)\in\Omega$.  We will regard $\Omega_\#$ as a subset of
$\R^m\times\R^s$.

The following is a consequence of Theorem 5.1 in \cite{spaDAE}.

\begin{theorem}\label{tunoA}
  Let $\f$, $\h$, $\g$ be as above.  Define $\F\colon
  \R^m\times\R^s\to\R^m\times\R^s$ by
  \[
  \F(x,y)=\big(\f(x,y),\g(x,y)\big).
  \]
  Let $\Omega\subseteq[0,\infty)\times C_T(\R^m\times\R^s)$ be open
  and assume that $\deg(\F, \Omega_\#)$ is well defined and
  nonzero. Then there exists a connected set $\Gamma$ of nontrivial
  $T$-pairs for \eqref{dae1} that meets
  $\mathcal{F}^{-1}(0)\cap\Omega$ and cannot be both bounded and
  contained in $\Omega$.
\end{theorem}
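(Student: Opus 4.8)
The plan is to reduce the differential-algebraic equation \eqref{dae1} to an ordinary differential equation on the manifold $\mathfrak{M}=\g^{-1}(0)$ and then invoke the Furi--Pera-type continuation principle for parametrized ODEs on manifolds that underlies Theorem 5.1 of \cite{spaDAE}. The key observation is that, since $\D_2\g(x,y)$ is invertible everywhere, the implicit function theorem lets us (locally) solve $\g(x,y)=0$ for $y$ as a function of $x$; globally, $\mathfrak{M}$ is a closed submanifold of $\R^m\times\R^s$ diffeomorphic (via the projection onto the first factor, or at least locally so) to an open subset of $\R^m$, and a $T$-periodic solution $(x,y)$ of \eqref{dae1} is exactly a $T$-periodic trajectory on $\mathfrak{M}$ of the tangent vector field obtained by projecting $(\dot x,\dot y)=\big(\f(x,y)+\lambda\h(t,x,y),\ast\big)$ compatibly with the constraint.

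First I would make precise the vector field on $\mathfrak{M}$: differentiating $\g(x(t),y(t))=0$ gives $\D_1\g\,\dot x+\D_2\g\,\dot y=0$, hence $\dot y=-(\D_2\g)^{-1}\D_1\g\,\dot x$, so on $\mathfrak{M}$ the dynamics is governed by the (autonomous part) vector field $w(x,y)=\big(\f(x,y),-(\D_2\g)^{-1}\D_1\g\,\f(x,y)\big)$ together with the $\lambda$-dependent perturbation built analogously from $\h$. One checks that $w$ is tangent to $\mathfrak{M}$ and that its zeros on $\mathfrak{M}$ correspond to the zeros of $\F$ restricted to $\mathfrak{M}$; moreover, because the second component of $\F$ is $\g$ itself, which vanishes precisely on $\mathfrak{M}$, the Brouwer degree $\deg(\F,\Omega_\#)$ equals (up to the standard sign/excision argument) the degree of the tangent vector field $w$ on $\mathfrak{M}\cap\Omega_\#$ in the sense used in the Furi--Pera theory. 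This identification of degrees is the conceptual heart of the statement.

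Next I would apply the continuation theorem for $T$-periodic solutions of $\dot\xi=w(\xi)+\lambda(\text{perturbation})$ on the manifold $\mathfrak{M}$ — this is exactly the content imported from \cite{spaDAE} (see also \cite{cala,CS1}): the nonvanishing of the degree of the autonomous field on $\Omega_\#$ forces a connected set $\Gamma$ of nontrivial $T$-pairs that meets the set of zeros (i.e.\ $\F^{-1}(0)\cap\Omega$, equivalently the rest points of $w$ in $\Omega$) and is not simultaneously bounded and contained in $\Omega$. Pulling this back through the correspondence between solutions on $\mathfrak{M}$ and solutions of the DAE \eqref{dae1} (which is a homeomorphism on the relevant spaces of $T$-periodic pairs, again by the implicit function theorem applied uniformly) transports $\Gamma$ to the desired connected set of $T$-pairs for \eqref{dae1}.

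The main obstacle I anticipate is the bookkeeping in the degree identification: one must verify carefully that $\deg(\F,\Omega_\#)$, a Brouwer degree of a map $\R^m\times\R^s\to\R^m\times\R^s$, really does coincide with the degree of the tangent vector field $w$ on the implicitly defined manifold restricted to the slice $\Omega_\#\cap\mathfrak{M}$ — this requires an excision/homotopy argument exploiting that $\F=(\f,\g)$ has its second block equal to the defining function $\g$ with invertible $\D_2\g$, so that a suitable linear homotopy in the $y$-directions does not introduce spurious zeros. Once this is in place, the rest is a routine transcription of the known continuation result; I do not expect difficulty in the topological part itself, since that is quoted from \cite{spaDAE}.
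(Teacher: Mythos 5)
The paper itself gives no argument for this statement: Theorem \ref{tunoA} is stated as a direct consequence of Theorem 5.1 in \cite{spaDAE}, so the only ``proof'' in the text is the citation. Your sketch essentially reconstructs how that cited result is obtained, and the strategy is the right one: since $\partial_2\g$ is everywhere invertible, $T$-periodic solutions of \eqref{dae1} correspond to $T$-periodic solutions of the ODE on $\mathfrak{M}=\g^{-1}(0)$ driven by the tangent fields $(x,y)\mapsto\big(\f(x,y),-[\partial_2\g(x,y)]^{-1}\partial_1\g(x,y)\f(x,y)\big)$ and its analogue built from $\h$, their common zeros on $\mathfrak{M}$ are exactly $\F^{-1}(0)$, and the Furi--Pera continuation principle on manifolds then yields the branch $\Gamma$. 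Two caveats. First, the step you rightly call the ``conceptual heart'' --- the identification of $\deg(\F,\Omega_\#)$ with the degree of the tangent vector field on $\mathfrak{M}\cap\Omega_\#$ --- is asserted but not carried out; this identity (which holds only up to the locally constant sign of $\det\partial_2\g$, harmless here since only nonvanishing is used) is precisely the technical content of \cite{spaDAE} (see also \cite{CS1}), so your argument is complete only modulo that lemma, exactly as the paper's is complete only modulo the citation. Second, the parenthetical claim that $\mathfrak{M}$ is diffeomorphic to an open subset of $\R^m$ via the first-factor projection is true only locally (as you hedge); nothing in the argument needs the global statement, so it is best dropped rather than relied upon.
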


The following is a consequence of Theorem 2.2 in \cite{cala}.

\begin{theorem}\label{tunoB}
  Let $\h$ and $\g$ be as above.  Define $\omega\colon
  \R^m\times\R^s\to\R^m\times\R^s$ by
  \[
  \omega(x,y)=\left(\frac{1}{T}\int_0^T \h(x,y)\dif t,\g(x,y)\right).
  \]
  Let $\Omega\subseteq [0,\infty)\times C_T(\R^m\times\R^s)$ be open
  and assume that $\deg(\omega,\Omega_\#)$ is well-defined and
  nonzero. Then there exists a connected set $\Gamma$ of nontrivial
  $T$-pairs for \eqref{ord2dae2} that meets $\omega^{-1}(0)\cap\Omega$
  and cannot be both bounded and contained in $\Omega$.
\end{theorem}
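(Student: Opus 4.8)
The plan is to deduce the statement from the continuation principle for parametrized ordinary differential equations on a manifold proved as Theorem~2.2 in \cite{cala}: the point is that, after eliminating the algebraic variable, \eqref{dae2} is exactly an equation of that type on the implicitly defined manifold $\mathfrak{M}=\g^{-1}(0)$. Since $\partial_2\g$ is everywhere invertible, $\g$ is a submersion, so $\mathfrak{M}$ is a closed $C^\infty$ submanifold of $\R^m\X\R^s$ of dimension $m$; and, as already observed, along any solution $y$ is a $C^1$ function of $x$, so differentiating $\g\big(x(t),y(t)\big)\equiv0$ gives $\dot y=-\partial_2\g(x,y)^{-1}\partial_1\g(x,y)\dot x$. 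Hence $(x,y)$ solves \eqref{dae2} if and only if $\xi=(x,y)$ runs in $\mathfrak{M}$ and solves
\[
\dot\xi=\lambda\,\t\h(t,\xi),\qquad \t\h(t,x,y):=\Big(\h(t,x,y),\,-\partial_2\g(x,y)^{-1}\partial_1\g(x,y)\,\h(t,x,y)\Big),
\]
where, for each fixed $t$, $\t\h(t,\cdot)$ is a continuous vector field tangent to $\mathfrak{M}$ (its value at a point of $\mathfrak{M}$ lies in $\ker d\g$). Thus, under the closed embedding $C_T(\mathfrak{M})\hookrightarrow C_T(\R^m\X\R^s)$, the set of $T$-pairs of \eqref{dae2} is carried onto the set of $T$-pairs of this ODE on $\mathfrak{M}$, and the two notions of triviality match.

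Next I would translate the degree hypothesis. Because $\partial_1\g$ and $\partial_2\g$ do not depend on $t$, the autonomous average of $\t\h$ is $\xi\mapsto\big(\su\h(\xi),\,-\partial_2\g(\xi)^{-1}\partial_1\g(\xi)\,\su\h(\xi)\big)$, with $\su\h(\xi):=\frac1T\int_0^T\h(t,\xi)\dif t$; in particular its zero set on $\mathfrak{M}$ is precisely $\omega^{-1}(0)$, and, by excision, $\deg(\omega,\Omega_\#)$ depends only on the behaviour of $\omega$ near $\mathfrak{M}\cap\Omega_\#$. An elementary computation with the block structure of the relevant Jacobians, using exactly the invertibility of $\partial_2\g$, shows that for any continuous $a\colon\R^m\X\R^s\to\R^m$ the degree of the tangent field $(x,y)\mapsto\big(a(x,y),-\partial_2\g(x,y)^{-1}\partial_1\g(x,y)\,a(x,y)\big)$ on an open subset $V$ of $\mathfrak{M}$ coincides with the Brouwer degree of $(x,y)\mapsto\big(a(x,y),\g(x,y)\big)$ on any open subset of $\R^m\X\R^s$ that meets $\mathfrak{M}$ in $V$. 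Taking $a=\su\h$ identifies the manifold degree of the average tangent field on $\mathfrak{M}\cap\Omega_\#$ with $\deg(\omega,\Omega_\#)$, which is nonzero by hypothesis. Feeding this into Theorem~2.2 of \cite{cala} yields a connected set of nontrivial $T$-pairs of the ODE on $\mathfrak{M}$ that meets $\omega^{-1}(0)\cap\Omega$ and cannot be both bounded and contained in $\Omega$; pushing it forward through the embedding above produces the desired $\Gamma$, all three properties being preserved since the correspondence between $T$-pairs is a homeomorphism onto its image.

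The step I expect to demand the most care is the degree identification just invoked: it is the bridge between the manifold-theoretic hypothesis of \cite{cala} (nonvanishing of the degree of a tangent vector field on $\mathfrak{M}$) and the Brouwer-degree hypothesis stated here, and it is precisely where the invertibility of $\partial_2\g$ is genuinely used. Concretely, one reduces both degrees to a common local computation via the implicit representation $y=\phi(x)$ of $\mathfrak{M}$ and compares Jacobian determinants --- a matter of matrix analysis. Everything else is routine: the reduction of \eqref{dae2} to an ODE on $\mathfrak{M}$, the passage to the spaces of $T$-periodic maps, and the persistence of nontriviality and of the ``no bounded component inside $\Omega$'' property. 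In fact, once the translation is carried out, the assertion is, up to notation, exactly Theorem~2.2 of \cite{cala}, so the proof reduces to checking that the hypotheses of that theorem are fulfilled.
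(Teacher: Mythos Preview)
Your proposal is correct and aligns with the paper, which gives no argument beyond recording the statement as ``a consequence of Theorem~2.2 in \cite{cala}''; you flesh out the reduction of \eqref{dae2} to an ODE on $\mathfrak{M}=\g^{-1}(0)$ and the identification of $\deg(\omega,\Omega_\#)$ with the degree of the averaged tangent field. One small point of attribution: the paper's introduction makes clear that \cite{cala} already formulates its continuation principle directly with a Brouwer-degree hypothesis (not a tangent-field-degree one), so the degree bridge you single out as the delicate step is presumably carried out inside \cite{cala} itself, and Theorem~2.2 there is essentially the present statement rather than a Furi--Pera-type manifold result---your sketch is thus an unpacking of what \cite{cala} contains rather than additional work needed on top of it.
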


\subsection{Second order DAEs}\label{GenIIord}
Consider the following second order parametrized DAEs:

\begin{equation}\label{ord2dae1}
  \left\{
    \begin{array}{l}
      \ddot x = \f(x,y,\dot x,\dot y)+\lambda \h(t,x,y,\dot x,\dot y),\\
      \g(x,y)=0,
    \end{array}\right.
\end{equation}
and
\begin{equation}\label{ord2dae2}
  \left\{
    \begin{array}{l}
      \ddot x = \lambda \h(t,x,y,\dot x,\dot y),\\
      \g(x,y)=0,
    \end{array}\right.
\end{equation}
where we assume that $\f\colon
\R^m\times\R^s\times\R^m\times\R^s\to\R^m$ and $\h\colon\R\times
\R^m\times\R^s\times\R^m\times\R^s\to\R^m$ are continuous maps, $\h$
is $T$-periodic in the first variable, and $\g\colon
\R^m\times\R^s\to\R^s$ is $C^\infty$ and such that $\partial_2\g(x,y)$
is invertible for all $(x,y)$.

By a solution of \eqref{ord2dae1} we mean a pair of $C^2$ functions
$x$ and $y$ defined on an interval $I$ with the property that the
following equalites hold for all $t\in I$: $\ddot x(t)
=\f\big(t,x(t),y(t),\dot x(t),\dot
y(t)\big)+\lambda\h\big(t,x(t),y(t),\dot x(t),\dot y(t)\big)$ and
$\g\big(x(t),y(t)\big)=0$.  Notice that, as in the first order case,
it is equivalent to ask only the continuity of $y$.

\begin{remark}
  We wish to point out that, despite their similarity, it might not be
  possible to reduce second order equations, such as \eqref{ord2dae1}
  or \eqref{ord2dae2}, to first order ones, as \eqref{dae1} or
  \eqref{dae2}.  Such is the case expecially when there is an explicit
  dependence on $\dot y$.  Thus the latter need a specific study.

  Consider for instance equation \eqref{ord2dae2}. The introduction of
  a new variable $u=\dot x$, as it is customary in phase-space
  techniques, would not reduce it to an equation of the form
  \eqref{dae1} or \eqref{dae2} with the required properties. In fact,
  we would get an equation of the type
  \[
  \left\{ \begin{array}{l}
      \dot x = u,\\
      \dot u =  \lambda f(t, x, y, u, \dot y), \\
      0= \bar g\big((x,u),y\big),
    \end{array} \right.
  \]
  where $\bar g\big((x,u);y\big)=g(x,y)$, which is not of the form
  \eqref{dae1}, because of the $\dot y$ in the second equation. The
  introduction of another auxiliary variable $v=\dot y$, as it could
  seem natural, would only complicate matters. Indeed, the resulting
  equation would be the following:
  \[
  \left\{ \begin{array}{l}
      \dot x =u,\\
      \dot y =v,\\
      \dot u =  \lambda f(t, x, y, u, v), \\
      0= \hat g\big((x,u);(y,v)\big),
    \end{array} \right.
  \]
  where $\hat g\big((x,u);(y,v)\big)=g(x,y)$. What is wrong with this
  equation is that the rigid dimensional separation between the
  ``differential'' and the ``algebraic'' parts required for
  \eqref{dae1} is now broken.
\end{remark}

The structure of the set of solution pairs of \eqref{ord2dae1} and of
\eqref{ord2dae2} has been studied in \cite{CS2}. As in Section
\ref{prel1}, we recall that by a \emph{$T$-pair} of \eqref{ord2dae1}
and of \eqref{ord2dae2} we mean a pair $\big(\lambda; (x,y)\big)\in
[0,\infty )\times C_T^1(\R^m\times\R^s)$ with $(x,y)$ a $T$-periodic
solution of \eqref{ord2dae1} and of \eqref{ord2dae2}, respectively.
Again, a $T$-pair $\big(\lambda; (x,y)\big)$ of \eqref{ord2dae1} or of
\eqref{ord2dae2} is said to be \emph{trivial} if $\lambda=0$ and
$(x,y)$ is constant.

As in Section \ref{prel1}, for simplicity we will regard every space
as its image in the following diagram of natural inclusions
\begin{equation}
  \begin{CD}
    \R^m\times\R^s & @>>> & C_T^1(\R^m\times\R^s) \\
    @VVV &  & @VVV \\
    \left[ 0,\infty \right) \times \R^m\times\R^s & @>>> & \left[
      0,\infty \right) \times C_T^1(\R^m\times\R^s)
  \end{CD}
  \label{diag2}
\end{equation}
with the obvious analogous identifications.

Again, given $\Omega\subseteq [0,\infty)\times C_T^1(\R^m\times\R^s)$,
we will denote by $\Omega_\#$ the set consisting of all the constant
functions $(\bar p,\bar q)$ with $\big(0;(\bar p,\bar
q)\big)\in\Omega$ and will regard $\Omega_\#$ as a subset of
$\R^m\times\R^s$.

The next results are straightforward consequences of Corollary 5.2 and
Corollary 5.3 in \cite{CS2}, respectively.

\begin{theorem}\label{tdueA}
  Let $\f$, $\h$, $\g$ be as above.  Define $\F\colon
  \R^m\times\R^s\to\R^m\times\R^s$ by
  \[
  \F(x,y)=\big(\f_0(x,y),\g(x,y)\big),
  \]
  where $\f_0(x,y):=\f(x,y,0,0)$.  Let
  $\Omega\subseteq[0,\infty)\times C_T^1(\R^m\times\R^s)$ be open and
  assume that $\deg(\F, \Omega_\#)$ is well defined and nonzero. Then
  there exists a connected set $\Gamma$ of nontrivial $T$-pairs for
  \eqref{ord2dae1} that meets $\mathcal{F}^{-1}(0)\cap\Omega$ and
  cannot be both bounded and contained in $\Omega$.
\end{theorem}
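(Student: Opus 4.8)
The plan is to obtain Theorem~\ref{tdueA} as a direct transcription of Corollary~5.2 in \cite{CS2}; the work therefore consists in matching the standing hypotheses and, above all, in identifying the degree-theoretic datum. First I would record the structural facts that place \eqref{ord2dae1} in the framework of \cite{CS2}. Since $\partial_2\g(x,y)$ is invertible for every $(x,y)$ and $\g$ is $C^\infty$, the set $\mathfrak{M}:=\g^{-1}(0)$ is a closed $C^\infty$ submanifold of $\R^m\times\R^s$, and by the implicit function theorem each of its points has a neighbourhood on which the algebraic variable $y$ is a smooth function of the differential variable $x$; consequently a solution of \eqref{ord2dae1} is entirely determined by $x$ together with the constraint, so \eqref{ord2dae1} is equivalent to a parametrized second order ordinary differential equation on $\mathfrak{M}$ of the type studied in \cite{CS2}. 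The continuity of $\f$ and $\h$ and the $T$-periodicity of $\h$ are precisely the regularity assumptions required there.

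Next I would identify $\F^{-1}(0)$ with the set of trivial $T$-pairs. If $\big(0;(x,y)\big)$ is a trivial $T$-pair of \eqref{ord2dae1}, then $(x,y)$ is constant, hence $\dot x\equiv\dot y\equiv 0$ and $\ddot x\equiv 0$; inserting this into the first line of \eqref{ord2dae1} with $\lambda=0$ gives $\f(x,y,0,0)=\f_0(x,y)=0$, while the second line gives $\g(x,y)=0$, so $(x,y)\in\F^{-1}(0)$. The converse is immediate, since any $(x,y)\in\F^{-1}(0)$ yields the constant $T$-periodic solution $\big(\bar x,\bar y\big)$ with $\lambda=0$. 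Thus the set from which the branch must emanate is $\{0\}\times\F^{-1}(0)$, and $\Omega_\#$ is exactly the open subset of $\R^m\times\R^s$ on which the relevant Brouwer degree is to be computed. With these identifications, Corollary~5.2 of \cite{CS2} applies and produces a connected set $\Gamma$ of nontrivial $T$-pairs of \eqref{ord2dae1} meeting $\F^{-1}(0)\cap\Omega$ that cannot be both bounded and contained in $\Omega$, which is the assertion.

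The only genuinely delicate step --- and the reason the deduction deserves to be called merely ``straightforward'' rather than automatic --- is the verification that the obstruction appearing in Corollary~5.2 of \cite{CS2} reduces to $\deg(\F,\Omega_\#)\neq 0$. Concretely, one must check that the map whose Brouwer degree is taken in \cite{CS2}, built from the ``at rest'' restriction of the autonomous right-hand side $\f$ together with the constraint map $\g$, collapses in the regime relevant to the starting set to exactly $\F=(\f_0,\g)$, and that the slice and constant-function identifications in diagram~\eqref{diag2} are compatible with the corresponding ones used in \cite{CS2}. No new analytic input beyond \cite{CS2} is required; in particular the possible non-compactness of $\mathfrak{M}$ is harmless because every conclusion is stated relative to the open set $\Omega$ and the degree is taken on the open set $\Omega_\#$.
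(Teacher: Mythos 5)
Your proposal is correct and follows exactly the route the paper takes: the paper offers no independent argument for Theorem~\ref{tdueA}, stating it as a straightforward consequence of Corollary~5.2 in \cite{CS2}, and your verifications (that $\F^{-1}(0)$ coincides with the set of constant solutions at $\lambda=0$, and that the degree datum and the identifications of diagram~\eqref{diag2} match those of \cite{CS2}) are precisely the routine checks implicit in that citation. Nothing further is needed.
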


\begin{theorem}\label{tdueB}
  Let $\h$ and $\g$ be as above.  Define $\omega\colon
  \R^m\times\R^s\to\R^m\times\R^s$ by
  \[
  \omega(x,y)=\left(\frac{1}{T}\int_0^T \h_0(x,y)\dif
    t,\g(x,y)\right),
  \]
  where $\h_0(x,y):=\h(x,y,0,0)$.  Let $\Omega\subseteq
  [0,\infty)\times C_T(\R^m\times\R^s)$ be open and assume that
  $\deg(\omega,\Omega_\#)$ is well-defined and nonzero. Then there
  exists a connected set $\Gamma$ of nontrivial $T$-pairs for
  \eqref{ord2dae2} that meets $\omega^{-1}(0)\cap\Omega$ and cannot be
  both bounded and contained in $\Omega$.
\end{theorem}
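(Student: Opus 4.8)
The statement is essentially a restatement, in the language of the present paper, of Corollary 5.3 in \cite{CS2}, and the plan is to deduce it from that result once the present situation has been recognized as an instance of the framework developed there. First, since $\partial_2\g(x,y)$ is invertible for every $(x,y)$, the map $\g$ is a submersion and $\mathfrak{M}=\g^{-1}(0)$ is an $m$-dimensional $C^\infty$ submanifold of $\R^m\X\R^s$; its tangent space $T_{(x,y)}\mathfrak{M}$ is the graph of the linear map $-\partial_2\g(x,y)^{-1}\partial_1\g(x,y)$, so the restriction to $\mathfrak{M}$ of the projection $(x,y)\mapsto x$ is a local diffeomorphism. Differentiating $\g\big(x(t),y(t)\big)=0$ twice along a solution shows that $\ddot y$ is determined by $\big(x,y,\dot x,\dot y,\ddot x\big)$; hence \eqref{ord2dae2} is equivalent to a parametrized second order differential equation on the fixed manifold $\mathfrak{M}$ of exactly the kind treated in \cite{CS2}. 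Moreover, the constant solutions of \eqref{ord2dae2} corresponding to $\lambda=0$ are precisely the points of $\mathfrak{M}$, so the set of trivial $T$-pairs is identified with a copy of $\mathfrak{M}$.

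Next I would invoke the global continuation principle for second order equations on manifolds from \cite{CS2} (Corollary 5.3), whose hypothesis requires the characteristic (degree) of a suitable tangent vector field on $\mathfrak{M}$ to be well defined and nonzero. Here that vector field is the one, say $\widehat{w}$, whose $x$-component is the averaged, frozen-velocity map $\overline{\h}_0(x,y):=\tfrac1T\int_0^T\h(t,x,y,0,0)\dif t$, namely
\[
\widehat{w}(x,y)=\Big(\overline{\h}_0(x,y),\,-\partial_2\g(x,y)^{-1}\partial_1\g(x,y)\,\overline{\h}_0(x,y)\Big)\in T_{(x,y)}\mathfrak{M}.
\]
This $\widehat{w}$ is the bifurcation function produced by averaging the identity $\int_0^T\ddot x\dif t=0$ near a constant solution: for $\lambda>0$ it forces $\overline{\h}_0\approx0$, while membership in $\mathfrak{M}$ forces $\g=0$, so bifurcation can occur only at zeros of $\omega=(\overline{\h}_0,\g)$. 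The decisive point is that the characteristic of $\widehat{w}$ on $\mathfrak{M}\cap\Omega_\#$ coincides, up to sign, with the Brouwer degree $\deg(\omega,\Omega_\#)$; this is the general identity, valid for maps of ``graph over a submersion'' form, behind the translation between tangent-field characteristics and Brouwer degrees exploited in \cite{cala, CS1, spaDAE}, and is the piece of elementary matrix analysis mentioned in the abstract. Granting it, the hypothesis $\deg(\omega,\Omega_\#)\neq0$ makes the characteristic of $\widehat{w}$ well defined and nonzero, and Corollary 5.3 of \cite{CS2} applies.

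That corollary then produces a connected set $\Gamma$ of nontrivial $T$-pairs of \eqref{ord2dae2} meeting the zero set of $\widehat{w}$ inside $\Omega$; since $\widehat{w}(x,y)=0\iff\overline{\h}_0(x,y)=0$ for $(x,y)\in\mathfrak{M}$, that zero set is exactly $\omega^{-1}(0)$, so $\Gamma$ meets $\omega^{-1}(0)\cap\Omega$, and it cannot be simultaneously bounded and contained in $\Omega$; unwinding the identifications in diagram \eqref{diag2} gives the statement. The step I expect to require the most care is the degree identity just quoted, together with checking that the averaging reduction is legitimate right along the trivial branch: because \eqref{ord2dae2} carries no $\lambda$-independent term, the degree hypothesis of Theorem \ref{tdueA} would here be vacuous, so the bifurcation data must be read off the $\lambda$-dependent term, and this is exactly why \eqref{ord2dae2} is treated separately in \cite{CS2} and not as the case $\f\equiv0$ of \eqref{ord2dae1}. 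If, as is likely, Corollary 5.3 of \cite{CS2} is already stated in terms of the Brouwer degree of $\omega$, this last paragraph collapses to a direct citation.
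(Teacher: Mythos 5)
Your proposal is correct and takes essentially the same route as the paper, whose entire proof of Theorem \ref{tdueB} consists of citing it as a straightforward consequence of Corollary 5.3 in \cite{CS2}. Your additional sketch of the reduction to an equation on $\mathfrak{M}=\g^{-1}(0)$ and of the identification between the characteristic of the lifted tangent field and $\deg(\omega,\Omega_\#)$ is, as you yourself anticipate, an elaboration that collapses to that direct citation.
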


\section{Coordinate transformation and main
  results} \label{sec:main-results}

\subsection{First order DAEs}\label{MainResOrder1}
 
We first investigate parametrized DAEs of the following form:
\begin{equation}\label{DAE_ord1_vm}
  \left\{
    \begin{array}{l}
      \dot x = \lambda f(t,x,y),\; \lambda\geq 0,\\
      g\big(A(t)x,B(t)y\big)=0
    \end{array}
  \right.
\end{equation}
where, as in the introduction, the map $f\colon \R\X\R^m\X\R^s\to
\R^m$ is continuous and $T$-periodic in the first variable, $g\colon
\R^m\times\R^s\to\R^s$ is $C^\infty$ and such that $\partial_2
g(\xi,\eta)$ is invertible for all $(\xi,\eta)$, and
$A\colon\R\to\mathrm{O}(\R^m)$ and $B\colon\R\to\mathrm{GL}(\R^s)$ are
$T$-periodic continuous (square-)matrix-valued maps.  We will assume
that $A$ is of class $C^1$.

Let us apply, for all $t$, a change of coordinates in $\R^m\X\R^s$:
\begin{equation}\label{coord-transf}
  \xi(t)=A(t)x(t),\quad \eta(t)=B(t)y(t).
\end{equation}
Let us rewrite the first of these two equations as
$x(t)=A^\T(t)\xi(t)$. Differentiating with respect to $t$ we get
\begin{equation}\label{xinxi}
  \dot x(t) = \dot A(t)^\T \xi(t) + A(t)^\T \dot \xi(t).
\end{equation}
Observe, in fact, that the operations of differentiation and
transposition commute; that is:
\[
\left(\dot A(t)\right)^\T=\frac{d}{dt}\left(A(t)^\T\right).
\] {}From \eqref{xinxi} we get $\dot \xi(t) = - A(t)\dot A(t)^\T\xi(t)
+ A(t)\dot x(t)$.  Thus, equation \eqref{DAE_ord1_vm} can be rewritten
in the new coordinates $(\xi,\eta)$ as follows:
\begin{equation}\label{DAE_ord1_vf}
  \left\{
    \begin{array}{l}
      \dot \xi = - A(t)\dot A(t)^\T\xi +\lambda F(t,\xi,\eta),\; 
      \lambda\geq 0,\\
      g(\xi,\eta)=0
    \end{array}
  \right.
\end{equation}
where $F\colon \R\X\R^m\X\R^s\to \R^m$ is defined by
\begin{equation}\label{def-F}
  F(t,\xi,\eta)= A(t)f\big(t,A(t)^\T \xi,B^{-1}(t) \eta \big).
\end{equation}
If we assume that the matrix $M:=A(t)\dot A(t)^\T$ is constant, then
we can obtain continuation results for $T$-pairs of
\eqref{DAE_ord1_vm} as consequences of the results in the previous
section.

In the following we will adopt the same notation as in Section
\ref{prel1}.

\begin{theorem}\label{thm.DAE.1ord}
  Let $f,g,A$ and $B$ be as above. Assume that $M:=A(t)\dot A(t)^\T$
  is constant and define
  $\mathcal{F}\colon\R^m\times\R^s\to\R^m\times\R^s$ by
  $\mathcal{F}(x,y)=\big(Mx,g(x,y)\big)$. Let $\Omega\subseteq
  [0,\infty)\times C_T(\R^m\times\R^s)$ be open and assume that
  $\deg(\mathcal{F},\Omega_\#)$ is well-defined and nonzero. Then
  there exists a connected set $\Gamma$ of nontrivial $T$-pairs for
  \eqref{DAE_ord1_vm} that meets $\mathcal{F}^{-1}(0)\cap\Omega$ and
  cannot be both bounded and contained in $\Omega$.
\end{theorem}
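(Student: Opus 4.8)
The plan is to reduce Theorem~\ref{thm.DAE.1ord} to Theorem~\ref{tunoB} applied to the transformed equation~\eqref{DAE_ord1_vf}. First I would observe that, by the coordinate change~\eqref{coord-transf}, the map $(\lambda;(x,y))\mapsto(\lambda;(\xi,\eta))$ with $\xi=A(\cdot)x$, $\eta=B(\cdot)y$ is a homeomorphism of $[0,\infty)\times C_T(\R^m\times\R^s)$ onto itself (its inverse being $x=A^\T(\cdot)\xi$, $y=B^{-1}(\cdot)\eta$, both continuous since $A,B$ are continuous and $T$-periodic). This homeomorphism carries $T$-pairs of~\eqref{DAE_ord1_vm} bijectively onto $T$-pairs of~\eqref{DAE_ord1_vf}, and it preserves triviality: if $\lambda=0$ then $\dot x=0$, so $x$ is constant, and $\g(A(t)x,B(t)y)=0$; but then $\dot\xi=-M\xi$ must hold with $\xi=A(t)x$ being $T$-periodic, and one checks that the only $T$-periodic solutions of $\dot\xi=-M\xi$ with $g(\xi,\eta)=0$ correspond to constant $(\xi,\eta)$ — this is exactly the point where assumption~\eqref{propA} and the structure $\mathcal F(x,y)=(Mx,g(x,y))$ enter (see below). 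So the two notions of "trivial $T$-pair" match up under the change of variables.

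Next I would set $\g:=g$ in the notation of Section~\ref{prel1} and take $\h(t,\xi,\eta):=-M\xi/\lambda+F(t,\xi,\eta)$ — no, more cleanly: equation~\eqref{DAE_ord1_vf} is \emph{not} literally of the form~\eqref{dae2}, because of the extra linear term $-M\xi$. Instead I would apply Theorem~\ref{tunoA} to~\eqref{DAE_ord1_vf}, taking $\f(\xi,\eta):=-M\xi$ and $\h(t,\xi,\eta):=F(t,\xi,\eta)$; these satisfy the hypotheses of Theorem~\ref{tunoA} ($\f$ continuous, $\h$ continuous and $T$-periodic in $t$, $\g=g$ of class $C^\infty$ with $\partial_2 g$ invertible). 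Theorem~\ref{tunoA} then produces a connected set $\t\Gamma$ of nontrivial $T$-pairs of~\eqref{DAE_ord1_vf} meeting $\t{\mathcal F}^{-1}(0)\cap\t\Omega$ and not both bounded and contained in $\t\Omega$, where $\t{\mathcal F}(\xi,\eta)=(-M\xi,g(\xi,\eta))$ and $\t\Omega$ is the image of $\Omega$ under the coordinate homeomorphism. Since $\deg(\t{\mathcal F},\cdot)=\deg(\mathcal F,\cdot)$ up to sign (the sign of $\det(-M)$ versus $\det M$ is irrelevant for nonvanishing), and since $\Omega_\#$ is carried to $\t\Omega_\#$ by $(p,q)\mapsto(A(0)p,B(0)q)$ — here $T$-periodicity forces the constant-slice identification to use $t=0$ — the hypothesis $\deg(\mathcal F,\Omega_\#)\neq 0$ gives $\deg(\t{\mathcal F},\t\Omega_\#)\neq 0$. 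Pulling $\t\Gamma$ back through the homeomorphism yields the desired $\Gamma$.

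The main obstacle is the verification that trivial $T$-pairs correspond under the transformation, equivalently that the only $T$-periodic solutions of $\dot\xi=-M\xi$ lying on $\mathfrak M=g^{-1}(0)$ are constants, and more precisely that a $T$-pair $(\lambda;(\xi,\eta))$ of~\eqref{DAE_ord1_vf} is trivial in the sense of Section~\ref{prel1} (i.e.\ $\lambda=0$ and $(\xi,\eta)$ constant) if and only if the corresponding $T$-pair of~\eqref{DAE_ord1_vm} is trivial. When $\lambda=0$, $x$ constant is equivalent to $\xi(t)=A(t)x$, and differentiating gives $\dot\xi=\dot A x = \dot A A^\T\xi = M^\T\xi$; but from~\eqref{DAE_ord1_vf} with $\lambda=0$ we also have $\dot\xi=-M\xi$, so $(M+M^\T)\xi\equiv 0$. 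Now $M=A\dot A^\T$, and differentiating $AA^\T=I$ gives $\dot A A^\T + A\dot A^\T=0$, i.e.\ $M^\T=-M$, so $M$ is skew-symmetric and $M+M^\T=0$ automatically — hence this is vacuous, and I must argue differently: when $\lambda=0$, $x$ is constant precisely when $\dot x\equiv 0$, which by~\eqref{xinxi}--type manipulation is automatic from $\dot x=0$; the real content is that $\t{\mathcal F}^{-1}(0)=\{(\xi,\eta):M\xi=0,\ g(\xi,\eta)=0\}$ consists of equilibria of~\eqref{DAE_ord1_vf} at $\lambda=0$, and these pull back to equilibria of~\eqref{DAE_ord1_vm}. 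I expect the cleanest route is: $\lambda=0$ makes~\eqref{DAE_ord1_vm} read $\dot x=0$, so $x\equiv x_0$ constant; then $\xi(t)=A(t)x_0$ is $T$-periodic and satisfies $\dot\xi=-M\xi$, which combined with $g(\xi,\eta)=0$ and the skew-symmetry of $M$ forces (via an energy estimate $\tfrac{d}{dt}|\xi|^2=2\langle\xi,-M\xi\rangle=0$, so $|\xi|$ constant, together with constancy of $x_0$) that $\xi$ is constant iff $Mx_0=0$; handling the case $Mx_0\neq 0$ — where $x$ is a trivial pair of~\eqref{DAE_ord1_vm} but $(\xi,\eta)$ is a \emph{nonconstant} periodic orbit of~\eqref{DAE_ord1_vf} — requires the matrix-analysis lemmas deferred to Section~\ref{sec:technical-lemmas}, and reconciling this mismatch with the degree hypothesis is the delicate step.
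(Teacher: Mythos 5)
Your overall strategy coincides with the paper's: rewrite \eqref{DAE_ord1_vm} in the coordinates \eqref{coord-transf}, apply Theorem \ref{tunoA} to the transformed system \eqref{DAE_ord1_vf} with $\f(\xi,\eta)=-M\xi$ and $\h=F$, and pull the resulting branch back through the homeomorphism $\mathfrak{H}\big(\lambda,(x,y)\big)=\big(\lambda,(\xi,\eta)\big)$; the sign discrepancy between $-M$ and $M$ is indeed immaterial. However, two steps are not actually carried out. First, your transfer of the degree hypothesis rests on the claim that $\Omega_\#$ is carried onto $\t\Omega_\#$ by $(p,q)\mapsto(A(0)p,B(0)q)$. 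This is false: $\mathfrak{H}$ sends the constant function $(\bar p,\bar q)$ to $t\mapsto\big(A(t)p,B(t)q\big)$, which is in general nonconstant, so the constant slice of $\mathfrak{H}(\Omega)$ is $\big\{(\xi,\eta): \big(0;(A(\cdot)^\T\xi,B(\cdot)^{-1}\eta)\big)\in\Omega\big\}$ and is not obtained from $\Omega_\#$ by evaluating at $t=0$ (if the $\lambda=0$ slice of $\Omega$ is a small ball around a suitable nonconstant function, one of the two slices can be empty while the other is not). So the implication $\deg(\mathcal F,\Omega_\#)\neq0\Rightarrow\deg(\t{\mathcal F},\t\Omega_\#)\neq0$ is not justified by what you wrote; the paper itself applies Theorem \ref{tunoA} on $\mathfrak H(\Omega)$ without commenting on this point, but your stated justification is incorrect as it stands.

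Second, and more importantly, you never close the step you yourself single out as delicate: that the pulled-back branch consists of \emph{nontrivial} $T$-pairs of \eqref{DAE_ord1_vm}, i.e.\ that a trivial $T$-pair of \eqref{DAE_ord1_vm} goes to a trivial $T$-pair of the transformed equation. Your first paragraph asserts that the only $T$-periodic solutions of $\dot\xi=-M\xi$ on $g^{-1}(0)$ are constants, which is false in general ($M$ is skew-symmetric, so whenever $e^{-MT}=I$ every solution of $\dot\xi=-M\xi$ is $T$-periodic); the energy identity $\frac{d}{dt}|\xi|^2=0$ only gives $|\xi|$ constant, not $\xi$ constant; and your final sentences concede that when $\dot A(\cdot)x_0\not\equiv0$ a constant solution of \eqref{DAE_ord1_vm} corresponds to a nonconstant $T$-periodic solution of \eqref{DAE_ord1_vf}, leaving the "reconciliation" unresolved. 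This is precisely the correspondence the paper settles (very tersely, by asserting that $\mathfrak H$ preserves triviality) before concluding with $\Gamma=\mathfrak H^{-1}(\Upsilon)$; since you leave it open, your last sentence "pulling $\t\Gamma$ back yields the desired $\Gamma$" is not established, and the proposal remains an outline of the paper's argument rather than a proof.
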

\begin{proof}
  Consider the transformation \eqref{coord-transf}. As discussed
  above, in the new coordinates $\xi,\eta$ equation
  \eqref{DAE_ord1_vm} becomes \eqref{DAE_ord1_vf}, which we write as
  \begin{equation}\label{DAEo1tmp}
    \left\{\begin{array}{l}
        \dot\xi=-M\xi+\lambda F(t,\xi,\eta),\\
        g(\xi,\eta)=0,
      \end{array}\right.
  \end{equation}
  where $F$ is defined as in \eqref{def-F}. Consider also the
  homeomorphism $\mathfrak{H}\colon[0,\infty)\times
  C_T(\R^m\times\R^s)\to[0,\infty)\times C_T(\R^m\times\R^s)$ given by
  $\mathfrak{H}\big(\lambda,(x,y)\big)=\big(\lambda,(\xi,\eta)\big)$
  with $\xi$ and $\eta$ given by \eqref{coord-transf}. Clearly
  $\mathfrak{H}$ establish a homeomorphism between the space $X$ of
  $T$-pairs of \eqref{DAEo1tmp} and the space $\mathcal{X}$ of
  $T$-pairs of \eqref{DAE_ord1_vf}, which preserves triviality. In the
  sense that $\mathfrak{H}$ takes trivial $T$-pairs of
  \eqref{DAEo1tmp} to trivial ones of \eqref{DAE_ord1_vf} and, vice
  versa, $\mathfrak{H}^{-1}$ makes trivial $T$-pairs of
  \eqref{DAE_ord1_vf} correspond to trivial ones of \eqref{DAEo1tmp}.

  Let $\mathcal{W}=\mathfrak{H}(\Omega)$. Applying Theorem \ref{tunoA}
  we get the existence of a connected set, let us say $\Upsilon$, of
  nontrivial $T$-pairs for \eqref{DAEo1tmp} that meets
  $\mathcal{F}^{-1}(0)\cap\mathcal{W}$ and cannot be both bounded and
  contained in $\mathcal{W}$. One sees immediately that $\Gamma=
  \mathfrak{H}^{-1}(\Upsilon)$ has the required properties.
\end{proof}

In the following consequence of Theorem \ref{thm.DAE.1ord} we further
assume that $M$ is nonsingular and use the properties of Brouwer
degree to get a continuation result with the sole assumption that
$[g(0,\cdot)]^{-1}(0)\cap\Omega_\#$ is a nonempty and compact subset
of $\R^m\times\R^s$.

\begin{corollary}\label{cor.DAE.1ord}
  Let $f,g,A$ and $B$ be as above. Assume that $M:=A(t)\dot A(t)^\T$
  is constant and nonsingular.  Let $\Omega\subseteq [0,\infty)\times
  C_T(\R^m\times\R^s)$ be open. Assume that the set
  $[g(0,\cdot)]^{-1}(0)\cap\Omega_\#$ is nonempty and compact. Then
  there exists a connected set $\Gamma$ of nontrivial $T$-pairs for
  \eqref{DAE_ord1_vm} that meets $[g(0,\cdot)]^{-1}(0)\cap\Omega$ and
  cannot be both bounded and contained in $\Omega$.
\end{corollary}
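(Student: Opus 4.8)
The plan is to deduce the statement from Theorem \ref{thm.DAE.1ord}, so the only thing to check is that, under the hypotheses at hand, $\deg(\mathcal{F},\Omega_\#)$ is well defined and nonzero, where $\mathcal{F}(x,y)=\big(Mx,g(x,y)\big)$ as in that theorem. First I would note that, since $M$ is nonsingular, $Mx=0$ holds only for $x=0$, so $\mathcal{F}^{-1}(0)=\{0\}\X[g(0,\cdot)]^{-1}(0)$; identifying $\R^m\X\R^s$ with the subspace of constant maps in $C_T(\R^m\X\R^s)$, this means $\mathcal{F}^{-1}(0)\cap\Omega_\#$ is precisely the set $[g(0,\cdot)]^{-1}(0)\cap\Omega_\#$, which is nonempty and compact by assumption. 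Since $\Omega_\#$ is open in $\R^m\X\R^s$ (being the preimage of the open set $\Omega$ under the continuous map $(p,q)\mapsto\big(0;(\bar p,\bar q)\big)$), the Brouwer degree $\deg(\mathcal{F},\Omega_\#)$ is then well defined.

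For the nonvanishing I would evaluate $\deg(\mathcal{F},\Omega_\#)$ as the sum of the local indices of $\mathcal{F}$ at its zeros in $\Omega_\#$. The Jacobian of $\mathcal{F}$ is block lower triangular,
\[
D\mathcal{F}(x,y)=\begin{pmatrix} M & 0\\ \partial_1 g(x,y) & \partial_2 g(x,y)\end{pmatrix},
\qquad\text{so}\qquad
\det D\mathcal{F}(x,y)=\det M\cdot\det\partial_2 g(x,y).
\]
Here the decisive observation is that $\partial_2 g$ is invertible at \emph{every} point of the connected space $\R^m\X\R^s$, whence the continuous function $\det\partial_2 g$ never vanishes and thus has constant sign there, say $\sigma\in\{-1,+1\}$. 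Hence $D\mathcal{F}$ is nonsingular everywhere, so each zero of $\mathcal{F}$ is nondegenerate — in particular isolated, by the inverse function theorem — with local index $\sign(\det M)\cdot\sigma$, the same value at every zero. As $\mathcal{F}^{-1}(0)\cap\Omega_\#$ is discrete and (by hypothesis) compact it is finite, and it is nonempty; therefore
\[
\deg(\mathcal{F},\Omega_\#)=\sign(\det M)\cdot\sigma\cdot\#\big(\mathcal{F}^{-1}(0)\cap\Omega_\#\big)\neq0.
\]
Theorem \ref{thm.DAE.1ord} then yields a connected set $\Gamma$ of nontrivial $T$-pairs for \eqref{DAE_ord1_vm} meeting $\mathcal{F}^{-1}(0)\cap\Omega=[g(0,\cdot)]^{-1}(0)\cap\Omega$ and which cannot be both bounded and contained in $\Omega$, as required.

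The step I expect to be the real point — rather than a genuine obstacle, since everything is elementary once it is spotted — is realizing that the modest hypothesis that $[g(0,\cdot)]^{-1}(0)\cap\Omega_\#$ be nonempty and compact already forces $\deg(\mathcal{F},\Omega_\#)\neq0$. This would fail for an arbitrary constraint $g$; it works here only because the standing assumption that $\partial_2 g$ be invertible \emph{on all of} $\R^m\X\R^s$ fixes the sign of $\det D\mathcal{F}$ globally, so the local indices at the several zeros of $\mathcal{F}$ all agree and cannot cancel. The well-definedness of the degree, the openness of $\Omega_\#$, and the final appeal to Theorem \ref{thm.DAE.1ord} are routine.
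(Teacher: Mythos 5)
Your proof is correct and follows essentially the paper's route: deduce the corollary from Theorem \ref{thm.DAE.1ord} by checking that the hypotheses force $\deg(\mathcal{F},\Omega_\#)$ to be well defined and nonzero, which rests exactly on the two facts you isolate, namely the nonsingularity of $M$ (so the zero set of $\mathcal{F}$ is the given nonempty compact set) and the invertibility of $\partial_2 g$ at \emph{every} point of the connected space $\R^m\times\R^s$ (so all local indices share the same sign and cannot cancel). The only difference is cosmetic: the paper organizes the computation via the reduction property of the Brouwer degree, writing $\deg(\mathcal{F},\Omega_\#)=\det M\cdot\deg\big(g(0,\cdot),\Omega_\#\cap(\{0\}\times\R^s)\big)$ and then counting the zeros of $g(0,\cdot)$, whereas you sum the local indices of $\mathcal{F}$ directly from its block-triangular Jacobian; the two computations amount to the same thing.
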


\begin{proof}
  Let $\mathcal{F}$ be as in the assertion of Theorem
  \ref{thm.DAE.1ord}. Since the first component of $\mathcal{F}$ is
  nonsingular, the reduction property of Brouwer degree implies
  \[
  \deg(\mathcal{F},\Omega_\#) =\det M \cdot
  \deg\Big(g(0,\cdot),\Omega_\#\cap(\{0\}\times\R^s) \Big).
  \]
  Observe now that since $\partial_2g(\xi,\eta)$ is never singular,
  \[
  \left|\deg\Big(g(0,\cdot),\Omega_\#\cap(\{0\}\times\R^s)
    \Big)\right| =\#\Big([g(0,\cdot)]^{-1}(0)\cap\Omega_\#\Big),
  \]
  which is finite and nonzero.
\end{proof}

In the next result we assume $M=0$ and apply Theorem \ref{tunoB}.

\begin{theorem}\label{thm.DAE.1ord2}
  Let $f,g,A$ and $B$ be as above. Assume that $A(t)\dot A(t)^\T$ is
  identically zero and define
  $\omega\colon\R^m\times\R^s\to\R^m\times\R^s$ by
  \[
  \omega(\xi,\eta)=\left(\frac{1}{T}\int_0^TA(t)f\big(t,A(t)^\T\xi,B^{-1}(t)\eta\big)dt,
    g(\xi,\eta)\right).
  \]
  Let $\Omega\subseteq [0,\infty)\times C_T(\R^m\times\R^s)$ be open
  and assume that $\deg(\omega,\Omega_\#)$ is well-defined and
  nonzero. Then there exists a connected set $\Gamma$ of nontrivial
  $T$-pairs for \eqref{DAE_ord1_vm} that meets
  $\omega^{-1}(0)\cap\Omega$ and cannot be both bounded and contained
  in $\Omega$.
\end{theorem}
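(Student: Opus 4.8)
The plan is to mirror the proof of Theorem~\ref{thm.DAE.1ord} almost verbatim, replacing the application of Theorem~\ref{tunoA} by an application of Theorem~\ref{tunoB}. First I would introduce the coordinate transformation \eqref{coord-transf}, namely $\xi(t)=A(t)x(t)$ and $\eta(t)=B(t)y(t)$, and recall that in these new coordinates equation \eqref{DAE_ord1_vm} becomes \eqref{DAE_ord1_vf}. Since by hypothesis $M:=A(t)\dot A(t)^\T$ is identically zero, the term $-A(t)\dot A(t)^\T\xi$ drops out and \eqref{DAE_ord1_vf} reduces to
\[
\left\{\begin{array}{l}
\dot\xi=\lambda F(t,\xi,\eta),\\
g(\xi,\eta)=0,
\end{array}\right.
\]
with $F$ as in \eqref{def-F}. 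This is precisely an equation of the form \eqref{dae2} with $\h(t,\xi,\eta)=F(t,\xi,\eta)=A(t)f\big(t,A(t)^\T\xi,B^{-1}(t)\eta\big)$, which is continuous and $T$-periodic in $t$, and with the same $\g=g$ satisfying the invertibility of $\partial_2 g$; so the hypotheses of Theorem~\ref{tunoB} are met.

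Next I would transport the problem to the new coordinates at the level of function spaces. As in the proof of Theorem~\ref{thm.DAE.1ord}, define the homeomorphism $\mathfrak{H}\colon[0,\infty)\times C_T(\R^m\times\R^s)\to[0,\infty)\times C_T(\R^m\times\R^s)$ by $\mathfrak{H}\big(\lambda,(x,y)\big)=\big(\lambda,(\xi,\eta)\big)$ with $\xi,\eta$ given by \eqref{coord-transf}; it is indeed a homeomorphism because $A(t)\in\Or(\R^m)$ and $B(t)\in\GL(\R^s)$ depend continuously and $T$-periodically on $t$, so the inverse $\mathfrak{H}^{-1}$ is given by $x(t)=A(t)^\T\xi(t)$, $y(t)=B^{-1}(t)\eta(t)$. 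Moreover $\mathfrak{H}$ carries $T$-pairs of \eqref{DAE_ord1_vm} bijectively onto $T$-pairs of the reduced equation and preserves triviality in both directions, since $\lambda=0$ is preserved and a constant $(x,y)$ corresponds to a constant $(\xi,\eta)$ exactly when $A$ and $B$ are such that the products $A(t)x$, $B(t)y$ are constant — which, for a genuinely moving frame, forces $x=y=0$; in any case the correspondence of trivial pairs goes through as in Theorem~\ref{thm.DAE.1ord} because both notions of triviality only involve $\lambda=0$ together with constancy, and $\mathfrak{H}$ restricted to the slice $\{0\}\times(\R^m\times\R^s)$ is the linear isomorphism $(x,y)\mapsto(A(0)x,B(0)y)$ precomposed appropriately.

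Then I would set $\mathcal{W}=\mathfrak{H}(\Omega)$ and observe that $\mathcal{W}_\#$, the constant slice of $\mathcal{W}$, is the image of $\Omega_\#$ under the relevant linear isomorphism, and check that the map $\omega$ in the statement is exactly the map called $\omega$ in Theorem~\ref{tunoB} for the reduced equation: indeed $\frac1T\int_0^T \h(\xi,\eta)\dif t=\frac1T\int_0^T A(t)f\big(t,A(t)^\T\xi,B^{-1}(t)\eta\big)\dif t$, matching the formula given. Hence $\deg(\omega,\mathcal{W}_\#)$ is well defined and nonzero whenever $\deg(\omega,\Omega_\#)$ is — here one must be slightly careful because $\omega$ as written acts in the original coordinates, so the cleanest route is to note that the hypothesis $\deg(\omega,\Omega_\#)\neq 0$ is stated directly in terms of $\Omega_\#$ and the map $\omega$ of the statement, and this map already incorporates the coordinate change, so no further pushforward of the degree is needed; Theorem~\ref{tunoB} applies on $\mathcal{W}$ with the map $\tilde\omega(\xi,\eta)=\big(\frac1T\int_0^T\h(\xi,\eta)\dif t,g(\xi,\eta)\big)$ and one identifies $\tilde\omega\circ(\text{coord change on the slice})$ with $\omega$. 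Applying Theorem~\ref{tunoB} yields a connected set $\Upsilon$ of nontrivial $T$-pairs for the reduced equation meeting $\tilde\omega^{-1}(0)\cap\mathcal{W}$ and not both bounded and contained in $\mathcal{W}$; then $\Gamma:=\mathfrak{H}^{-1}(\Upsilon)$ has all the required properties, being connected (continuous image), consisting of nontrivial $T$-pairs of \eqref{DAE_ord1_vm} (triviality-preservation), meeting $\omega^{-1}(0)\cap\Omega$, and not both bounded and contained in $\Omega$ (since $\mathfrak{H}$ and $\mathfrak{H}^{-1}$ are homeomorphisms, they preserve both boundedness — needs a word, as $\mathfrak{H}$ is only a homeomorphism, not uniformly bi-Lipschitz, but boundedness in $C_T$ is preserved because $\|A(t)\|$ and $\|B(t)^{\pm1}\|$ are bounded uniformly in $t$ by periodicity and continuity — and the property of being contained in $\Omega$ versus $\mathcal{W}$).

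The only genuinely delicate point, and the one I would spend the most care on, is the bookkeeping around the degree and the coordinate change: specifically verifying that the map $\omega$ appearing in the statement is literally the Furi–Pera-type map for the reduced equation \emph{read back in the original slice coordinates}, so that the hypothesis $\deg(\omega,\Omega_\#)\neq 0$ translates without loss into the hypothesis needed to invoke Theorem~\ref{tunoB} on $\mathcal{W}$. Everything else — the reduction via \eqref{coord-transf}, the homeomorphism $\mathfrak{H}$, triviality-preservation, and the preservation of boundedness under $\mathfrak{H}^{\pm1}$ using $T$-periodicity of $A,B$ — is routine and essentially identical to the argument already given for Theorem~\ref{thm.DAE.1ord}.
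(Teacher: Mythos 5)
Your proposal is correct and follows essentially the same route as the paper, whose entire proof of this statement is that it ``follows from Theorem \ref{tunoB} with the same proof of Theorem \ref{thm.DAE.1ord}'': perform the change of coordinates \eqref{coord-transf}, observe that with $A(t)\dot A(t)^\T\equiv 0$ the transformed system is of the form \eqref{dae2}, and transport the branch given by Theorem \ref{tunoB} back through the homeomorphism $\mathfrak{H}$. The extra care you devote to the degree bookkeeping and to preservation of triviality and boundedness is consistent with (indeed more detailed than) the argument the paper itself gives.
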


\begin{proof}
  Follows from Theorem \ref{tunoB} whith the same proof of Theorem
  \ref{thm.DAE.1ord}.
\end{proof}

%

\begin{example}\label{protante}
  Take $m=2$ and $s=1$. Let $f\colon\R\X\R^2\X\R\to\R^2$ be any
  continuous mapping $2\pi$-periodic in the first variable.  Consider
  \begin{equation}\label{eqprotante}
    \left\{
      \begin{array}{l}
        \dot x = \lambda f(t,x,y),\; \lambda\geq 0,\\
        y^3+y-x_1^2-x_2^2-(x_1\sin t+x_2\cos t)^2=0,
      \end{array}
    \right.
  \end{equation}
  where $x=(x_1,x_2)$. It is readily verified that
  \[
  y^3+y-x_1^2-x_2^2-(x_1\sin t+x_2\cos t)^2=g\big(A(t)x,y\big),
  \]
  where
  \[
  A(t):=
  \begin{pmatrix}
    \cos t & -\sin t\\
    \sin t & \cos t
  \end{pmatrix},\;\text{ and }\; g(p_1,p_2,q)=q^3+q-p_1^2-2p_2^2.
  \]
  Thus, the constraint can be regarded as the surface having equation
  $q^3+q=p_1^2+2p_2^2$, in the space $(p_1,p_2,q)$, revolving around
  the $q$ axis (a full rotation takes time $2\pi$). With the
  transformation \eqref{coord-transf} the above DAE becomes
  \[
  \left\{
    \begin{array}{l}
      \dot \xi = M \xi + \lambda F(t,\xi,\eta),\\
      g(\xi,\eta)=0,
    \end{array}
  \right.
  \]
  where
  \[
  M:=
  \begin{pmatrix}
    0 & 1 \\
    -1 & 0
  \end{pmatrix},\;\text{ and }\; F(t,\xi,\eta)=
  A(t)f\big(t,A(t)^\T\xi,B^{-1}(t)\eta).
  \]
  Let $\Omega=[0,\infty)\X C_T(\R^2\X\R)$. Since the degree in $\R$ of
  the map
  $(\xi_1,\xi_2,\eta)\mapsto\big(\xi_2,-\xi_1,\eta^3+\eta-\xi_1^2-\xi_2^2\big)$
  is equal to $1$, Theorem \ref{thm.DAE.1ord} yields an unbounded
  connected set of nontrivial $2\pi$-pairs for \eqref{eqprotante} that
  meets $\big(0;(0,0;0)\big)\in [0,\infty)\X\R^2\X\R$ (regarded as a
  $2\pi$-pair).
\end{example}

\begin{remark}\label{rem_conH}
  Notice that a similar coordinate transformation applies also to a
  slightly different situation. Consider the following DAE:
  \begin{equation}\label{Iord_conH}
    \left\{\begin{array}{l}
        \dot x = Hx+\lambda f(t,x,y),\; \lambda\geq 0,\\
        g\big(A(t)x,B(t)y\big)=0
      \end{array}\right.
  \end{equation} 
  where $A$, $B$, $f$ and $g$ are as in \eqref{DAE_ord1_vm} and $H$ is
  a matrix that commutes with $A$. Suppose, as above, that $M:=
  A(t)\dot A(t)^\T$ is constant (not necessarily invertible) and apply
  the transformation as indicated above. Equation \eqref{Iord_conH}
  becomes
  \begin{equation}\label{Iord_conH_trasf}
    \left\{
      \begin{array}{l}
        \dot \xi = \big(H- M\big)\xi +
        \lambda F(t,\xi,\eta),\; \lambda\geq 0,\\
        g(\xi,\eta)=0
      \end{array}
    \right.
  \end{equation}
  with $F$ as in \eqref{DAE_ord1_vf}, so that the results of the
  previous section are applicable to \eqref{Iord_conH_trasf}.
\end{remark}

\begin{example}
  Consider the following DAE:
  \begin{equation}\label{eqexconHIord}
    \left\{\begin{array}{l}
        \dot x_1=x_1+\lambda f_1(t,x_1,x_2,y)\\
        \dot x_2=\lambda f_2(t,x_1,x_2,y)\\
        y^5+y=x_1\cos t +x_2\sin t
      \end{array}\right.
  \end{equation}
  where $f_i\colon\R\X\R\X\R\X\R\to\R^2$, $i=1,2$, are continuous
  mappings $2\pi$-periodic in the first variable. If we put
  $x=(x_1,x_2)$ Equation \eqref{eqexconHIord} is of the form
  \eqref{Iord_conH} with
  \[
  H=\begin{pmatrix}1 & 0 \\ 0& 0\end{pmatrix},\quad A(t)
  = \begin{pmatrix}
    \cos t &  \sin t\\
    -\sin t & \cos t
  \end{pmatrix}, \quad B(t) \equiv \begin{pmatrix}
    1 & 0\\
    0 & 1
  \end{pmatrix},
  \]
  and $f\colon\R\X\R^2\to\R^2$ and $g\colon\R^2\X\R\to\R$ defined by
  \[
  f(t,x)=\big(f_1(t,x_1,x_2,y),f_1(t,x_1,x_2,y)\big)\quad\text{ and
  }\quad g(x,y)=y+y^3-x_1,
  \]
  respectively. Clearly, as in Remark \ref{rem_conH}, Equation
  \eqref{eqexconHIord} becomes
  \begin{equation}\label{eqModifC32}
    \left\{
      \begin{array}{l}
        \dot \xi =(H-M) \xi + \lambda F(t,\xi,\eta),\\
        \eta+\eta^5-\xi_1=0,
      \end{array}
    \right.
  \end{equation}
  where
  \[
  M:= \begin{pmatrix}
    0 & -1 \\
    1 & 0
  \end{pmatrix},\;\text{ and }\; F(t,\xi,\eta)=
  A(t)f\big(t,A(t)^\T\xi,B^{-1}(t)\eta\big).
  \]
  Equation \eqref{eqModifC32} is of the form considered in Corollary
  \ref{cor.DAE.1ord}.
\end{example}

\medskip In our next example we consider periodic perturbations of a
class of semi-linear DAEs (semi-linear DAEs find practical
applications in robotics and electrical circuit modeling see e.g.\
\cite{mgerdin, dae}). We will restrict ourselves to the case when the
equation has a particular `\emph{separated variables}' form, that is
\begin{equation}\label{DAE:impeqex}
  E\dot\xbf= F(t)\xbf + \lambda C(t)S(\xbf),
\end{equation}
$F\, \colon\, \R \to \R^{n\X n}$, $C\, \colon\, \R\to \R^{n\X n}$ and
$S \, \colon\, \R^n\to \R^{n}$ are continuous maps. Further, we assume
that $F$ and $C$ are $T$-periodic, $T>0$ given.

\begin{example}\label{exm.1stOrd}
  Consider Equation \eqref{DAE:impeqex} with $n=4$ and
  \begin{subequations}
    \label{DAEs:commutingmatrices}
    \begin{gather*}
      E= \begin{pmatrix}
        1 & 0 & 0 & 0 \\
        0 & 0 & 0 & 0 \\
        0 & 0 & 0 & 1 \\
        0 & 0 & 0 & 0 \\
      \end{pmatrix}, \qquad F(t) = \begin{pmatrix}
        0       & 0  & 0  & 0         \\
        \cos t & 1  & 0  & -\sin t  \\
        0       & 0  & 0  & 0         \\
        \sin t & 0  & 1  & \cos t   \\
      \end{pmatrix}\\
      \intertext{ and } C(t)= \begin{pmatrix}
        2 + \cos t & 1 & 0 & 1 \\
        0 & 0 & 0 & 0 \\
        1 & 3 + \sin t & 2 & 0 \\
        0 & 0 & 0 & 0 \\
      \end{pmatrix},\qquad S(\xbf)=\xbf.
    \end{gather*}
  \end{subequations}
  The following orthogonal matrices
  \begin{equation*}
    P =\begin{pmatrix} 
      1 & 0 & 0 & 0 \\
      0 & 0 & 1 & 0 \\
      0 & 1 & 0 & 0 \\
      0 & 0 & 0 & 1
    \end{pmatrix}\quad \textrm{and}\quad Q =\begin{pmatrix}
      1 & 0 & 0 & 0 \\
      0 & 0 & 1 & 0 \\
      0 & 0 & 0 & 1 \\
      0 & 1 & 0 & 0
    \end{pmatrix}
  \end{equation*}
  realize a singular value decomposition for $E$. In particular, we
  have that
  \begin{gather*}
    P^\T EQ = \left(\begin{array}{cc|cc}
        1 & 0 & 0 & 0\\
        0 & 1 & 0 & 0\\
        \hline
        0 & 0 & 0 &  0 \\
        0 &  0 & 0  & 0 \\
      \end{array}\right) =: 
    \left( \begin{array}{cc} \t E_{1} & 0 \\ 0 & 0
      \end{array} \right), 
    \\
    P^\T F(t)Q =\left(\begin{array}{cc|cc}
        0  & 0 & 0 & 0 \\
        0  & 0 & 0 & 0\\
        \hline
        \cos t & -\sin t & 1 & 0 \\
        \sin t & \cos t  & 0  & 1 \\
      \end{array} \right) =: 
    \left( \begin{array}{cc} 0 & 0 \\ \t F_3(t)  & \t F_4(t) 
      \end{array} \right)
    \\
    P^\T C(t)Q =\left(\begin{array}{cc|cc} 
        2 + \cos t & 1 & 1 & 0 \\
        1          & 0 &   3 + \sin t & 2 \\
        \hline
        0          & 0 & 0 & 0 \\
        0          & 0 & 0 & 0 \\
      \end{array} \right) =: 
    \left(\begin{array}{cc} \t C_{1} (t) & \t C_{2}(t) \\ 
        0  & 0 \end{array}\right).
  \end{gather*}
  Then, setting $\xbf =Q\left(\begin{smallmatrix}x\\
      y\end{smallmatrix}\right)$ with $x,y\in\R^2$ and multiplying
  \eqref{DAE:impeqex} by $P^\T$ on the left, we can rewrite Equation
  \eqref{DAE:impeqex} as
  \begin{equation*}
    P^\T EQ\left(\begin{smallmatrix}\dot x \\ \dot y\end{smallmatrix}\right) = P^\T F(t)
    Q\left(\begin{smallmatrix}x \\ y\end{smallmatrix}\right) 
    +\lambda(P^\T C(t)Q)Q^\T S\big(Q\left(\begin{smallmatrix}x\\ y\end{smallmatrix}\right)\big).
  \end{equation*}
  that is,
  \begin{equation*}
    \begin{pmatrix} \t E_1 & 0 \\ 0 & 0
    \end{pmatrix}
    \begin{pmatrix} \dot x \\
      \dot y \end{pmatrix} =
    \begin{pmatrix} 0 & 0 \\
      \t F_{3}(t) & \t F_{4} (t) \end{pmatrix}
    \begin{pmatrix} x \\
      y\end{pmatrix}
    + \lambda  \begin{pmatrix} \t C_{1} (t) & \t C_{2}(t) \\
      0 & 0 \end{pmatrix}\begin{pmatrix} \t S_1(x,y)\\ \t
      S_2(x, y)
    \end{pmatrix}
  \end{equation*}
  where we have set $Q^\T S(Q \mathbf{x})= \left(\begin{smallmatrix}
      \t S_1(x, y)\\ \t S_2(x, y)\end{smallmatrix}\right)$.  This
  equation can be rewritten as follows:
  \begin{equation*}
    \left\{ \begin{array}{l} 
        \dot x =\lambda {\t E_1}^{-1}\Big(  \t C_1(t) \t S_1(x,y)+ \t C_2(t) \t S_2(x,y)\Big),\\
        y +\t{F}_3(t)x=0 ,
      \end{array} \right. 
  \end{equation*}
  or, in our case, as
  \begin{equation*}
    \left\{ \begin{array}{l} 
        \begin{pmatrix}\dot x_1\\ \dot x_2\end{pmatrix}=
        \lambda \begin{pmatrix}(2+\cos t) x_1+x_2+y_1\\ 
          x_1+(3+\sin t)y_1+2y_2\end{pmatrix} ,\\[4mm]
        \begin{pmatrix}y_1\\ y_2\end{pmatrix} +
        \begin{pmatrix} \cos t & -\sin t\\
          \sin t & \cos t \end{pmatrix}
        \begin{pmatrix}x_1\\ x_2\end{pmatrix}=0 ,
      \end{array} \right. 
  \end{equation*}
  where we have put $x=\left(\begin{smallmatrix}\dot x_1\\ \dot
      x_2\end{smallmatrix}\right)$ and
  $y=\left(\begin{smallmatrix}y_1\\ y_2\end{smallmatrix}\right)$. The
  above DAE is of the form \eqref{DAE_ord1_vm} considered in Theorem
  \ref{thm.DAE.1ord2}. Observe that the map $\omega$ considered there
  is given by
  \[
  \omega(x_1,x_2;y_1,y_2)=\Big(y_1, 3y_1+2y_2,x_1+y_1,x_2+y_2\Big).
  \]
\end{example}

The example considered above is a particular case of a more general
procedure that we now roughly sketch. Take $E$, $F$ and $C$ as in
Equation \eqref{DAE:impeqex}, and let $\rank E =r$. Assume that
$n=2r$, and that
\begin{subequations}\label{tworelations}
  \begin{align}
    &\ker\, C^\T(t) = \ker\, E^\T\quad \forall \,t\in\R,
    \label{first-relations}\\
    &\im\, F(t) = \ker\, E^\T\quad \forall \,t\in\R.
    \label{second-relations}
  \end{align}
\end{subequations}
Let $P$, $Q$ be orthogonal matrices realizing a singular value
decomposition for $E$.  Multiply \eqref{DAE:impeqex} by $P^\T$ on the
left, and put $\xbf =Q\left(\begin{smallmatrix}x\\
    y\end{smallmatrix}\right)$ with $x,y\in\R^r$.  We get, as in
Example \ref{exm.1stOrd},
\begin{equation} \label{svdeq-1} P^\T EQ\left(\begin{smallmatrix}\dot
      x \\ \dot y\end{smallmatrix}\right) = P^\T F(t)
  Q\left(\begin{smallmatrix}x \\ y\end{smallmatrix}\right)
  +\lambda(P^\T C(t)Q)Q^\T S\big(Q\left(\begin{smallmatrix}x\\
      y\end{smallmatrix}\right)\big).
\end{equation}
Since $P$ and $Q$ realize a singular value decomposition of $E$, and
since $E$, $F$ and $C$ satisfy equations \eqref{tworelations}, an
inspection of the proof of \cite[Lemma~5.5]{BiSpa2011} (see also
\cite{BiSpa2012}) shows us that for all $t$,
\begin{equation*}
  P^\T E Q = 
  \begin{pmatrix} \t E_{1} & 0
    \\ 0 & 0
  \end{pmatrix},\,
  P^\T F(t) Q =     \begin{pmatrix} 0 & 0 \\
    \t F_{3}(t) & \t F_{4} (t)
  \end{pmatrix} \textrm{ and } P^\T C(t) Q =
  \begin{pmatrix} \t C_{1} (t) & \t C_{2}(t) \\
    0 & 0
  \end{pmatrix}.
\end{equation*}
Set $\xbf=Q\left(\begin{smallmatrix}x\\ y\end{smallmatrix}\right)$ and
$Q^\T S(Q\xbf)= \left(\begin{smallmatrix} \t S_1(x, y)\\ \t S_2(x,
    y)\end{smallmatrix}\right)$. Then, we can rewrite Equation
\eqref{svdeq-1} as
\begin{equation*}
  \begin{aligned}
    \begin{pmatrix} \t E_1 & 0 \\ 0 & 0
    \end{pmatrix}
    \begin{pmatrix} \dot x \\
      \dot y \end{pmatrix} =
    \left( \begin{array}{cc} 0 & 0 \\
        \t F_{3}(t) & \t F_{4} (t) \end{array} \right)
    \begin{pmatrix} x \\
      y\end{pmatrix}
    + \lambda  \left(\begin{array}{cc} \t C_{1} (t) & \t C_{2}(t) \\
        0 & 0 \end{array}\right)\begin{pmatrix} \t S_1(x,y)\\ \t
      S_2(x, y)
    \end{pmatrix}
  \end{aligned}
\end{equation*}
or, equivalently
\begin{equation*} \left\{ \begin{array}{l} \dot x = \lambda \t
      E_1^{-1} \left( \t C_1 (t) \t S_1(x,y)
        + \t C_2 (t) \t S_2(x, y) \right),\\
      \t{F}_3(t)x + \t{F}_4(t) y=0,
    \end{array}\right.
\end{equation*}
and, if $\t F_3(t)$ is invertible for all $t$,
\begin{equation} \label{forma-comoda2} \left\{ \begin{array}{l} \dot x
      = \lambda \t E_1^{-1} \left( \t C_1 (t) \t S_1(x,y)
        + \t C_2 (t) \t S_2(x, y) \right),\\
      x + [\t{F}_3(t)]^{-1}\t{F}_4(t) y=0,
    \end{array}\right.
\end{equation}
which is of type \eqref{DAE_ord1_vm} with $m=s=r$ if also $\t F_4(t)$
is invertible for all $t$.  \smallskip

\subsection{Second order DAEs}\label{MainResOrder2}
Let us now focus on parametrized second order DAEs and proceed as in
the first order case. Consider
\begin{equation}\label{DAE_ord2_vm}
  \left\{
    \begin{array}{l}
      \ddot x = \lambda f(t,x,y,\dot x,\dot y),\; \lambda\geq 0,\\
      g\big(A(t)x,B(t)y\big)=0
    \end{array}
  \right.
\end{equation}
where $f\colon \R\X\R^m\X\R^s\X\R^m\X\R^s\to \R^m$ is continuous and
$T$-periodic in the first variable, $g\colon \R^m\times\R^s\to\R^s$ is
$C^\infty$ and such that $\partial_2 g(\xi,\eta)$ is invertible for
all $(\xi,\eta)$, and the $T$-periodic matrix-valued maps
$A\colon\R\to\mathrm{O}(\R^m)$ and $B\colon\R\to\mathrm{GL}(\R^s)$ are
of class $C^2$ and $C^1$, respectively.  As in the frst order case we
consider the following change of coordinates for all $t$:
\[
\xi(t)=A(t)x(t),\quad \eta(t)=B(t)y(t).
\]
We can rewrite the first of these equations as $x(t)=A^\T(t)\xi(t)$
and, taking the derivative, we get
\[
\dot x = \dot A(t)^\T \xi + A(t)^\T \dot \xi,\quad \ddot x = \ddot
A(t)^\T\xi + 2\dot A(t)^\T\dot\xi+ A(t)^\T\ddot\xi.
\]
Let us multiply by $A$ on the left the second of these
equations. Reordering (and omitting the explicit dependence on $t$) we
get
\[
\ddot\xi = -A\ddot A^\T\xi -2A\dot A^\T\dot\xi + A\ddot x.
\]
Moreover, since $y(t)=B^{-1}(t)\eta(t)$,
\[
\dot y(t)=\frac{\dif }{\dif
  t}\left[B(t)^{-1}\right]\eta(t)+B^{-1}(t)\dot\eta(t).
\]
Thus we can rewrite our DAE, in the new coordinates, as follows:
\begin{equation}\label{DAE_ord2_vf}
  \left\{
    \begin{array}{l}
      \ddot \xi =-A(t)\ddot A(t)^\T\xi -2A(t)\dot A(t)^\T\dot\xi 
      +\lambda F\big(t,\xi,\eta,\dot\xi,\dot\eta\big),\; \lambda\geq 0,\\
      g(\xi,\eta)=0.
    \end{array}
  \right.
\end{equation}
where $F\colon \R\X\R^m\X\R^s\X\R^m\X\R^s\to \R^m$, defined by
\begin{multline*}
  F(t,\xi, \eta,u,v)=\\
  A(t)f\left(t,A(t)^\T \xi,B^{-1}(t) \eta,\dot A(t)^\T \xi + A(t)^\T
    u, \frac{\dif }{\dif t}\left[B(t)^{-1}\right] \eta+B^{-1}(t)
    v\right)
\end{multline*}
is clearly continuous and $T$-periodic.

Now, by Proposition \ref{prop1} (see Appendix), we have that if
$M:=A(t)\dot A(t)^\T$ is constant (and nonsingular), then $A(t)\ddot
A(t)^\T$ is constant (and nonsingular) as well, as it is equal to
$M^2$.  Thus, as for first-order equations, provided that $A(t)\dot
A(t)^\T$ is constant, this DAE can be treated with the methods of the
previous section.

It is also worth noticing that $\frac{\dif }{\dif
  t}\left[B(t)^{-1}\right]$, which appears in the expression of $F$,
can also be conveniently expressed as $-B(t)^{-1}\dot
B(t)B(t)^{-1}$. This trivial fact is readily established by
differentiating the relation $B(t) B(t)^{-1}=I$.

Proceeding as in the previous subsection, and using Theorems
\ref{tdueA} and \ref{tdueB} in place of Theorems \ref{tunoA} and
\ref{tunoB}, we get the following results, remarkably similar to
Theorems \ref{thm.DAE.1ord} and \ref{thm.DAE.1ord2}, and Corollary
\ref{cor.DAE.1ord}:

\begin{theorem}\label{thm.DAE.2ord}
  Let $f,g,A$ and $B$ be as above. Assume that $M:=A(t)\dot A(t)^\T$
  is constant and define
  $\mathcal{F}\colon\R^m\times\R^s\to\R^m\times\R^s$ by
  $\mathcal{F}(\xi,\eta)=\big(-M^2 \xi,g(\xi,\eta)\big)$. Let
  $\Omega\subseteq [0,\infty)\times C_T^1(\R^m\times\R^s)$ be open and
  assume that $\deg(\mathcal{F},\Omega_\#)$ is well-defined and
  nonzero. Then there exists a connected set $\Gamma$ of nontrivial
  $T$-pairs for \eqref{DAE_ord2_vm} that meets
  $\mathcal{F}^{-1}(0)\cap\Omega$ and cannot be both bounded and
  contained in $\Omega$.
\end{theorem}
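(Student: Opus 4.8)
The plan is to mimic exactly the proof of Theorem~\ref{thm.DAE.1ord}, replacing the first-order reduction results by their second-order counterparts. First I would apply the change of coordinates \eqref{coord-transf}, under which, as computed above, equation \eqref{DAE_ord2_vm} becomes \eqref{DAE_ord2_vf}. Using the hypothesis that $M:=A(t)\dot A(t)^\T$ is constant together with Proposition~\ref{prop1}, the coefficient $A(t)\ddot A(t)^\T$ appearing in \eqref{DAE_ord2_vf} equals $M^2$, which is likewise constant; hence \eqref{DAE_ord2_vf} can be written in the autonomous-coefficient form
\begin{equation*}
  \left\{\begin{array}{l}
      \ddot\xi = -M^2\xi - 2M\dot\xi + \lambda F(t,\xi,\eta,\dot\xi,\dot\eta),\\
      g(\xi,\eta)=0,
    \end{array}\right.
\end{equation*}
with $F$ the continuous, $T$-periodic map displayed before the statement. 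This is of the form \eqref{ord2dae1} with $\f(\xi,\eta,u,v) = -M^2\xi - 2Mu$ (so that $\f_0(\xi,\eta) = \f(\xi,\eta,0,0) = -M^2\xi$) and with the perturbation term $F$ in place of $\h$; note $\g$ is unchanged.

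Next I would introduce the homeomorphism $\mathfrak{H}\colon[0,\infty)\times C_T^1(\R^m\times\R^s)\to[0,\infty)\times C_T^1(\R^m\times\R^s)$ given by $\mathfrak{H}\big(\lambda,(x,y)\big)=\big(\lambda,(\xi,\eta)\big)$ with $\xi=A(\cdot)x$, $\eta=B(\cdot)y$, exactly as in the proof of Theorem~\ref{thm.DAE.1ord} but now on the $C_T^1$ level; since $A$ is $C^2$ and $B$ is $C^1$ and both are invertible for every $t$, this map is well defined, is a homeomorphism onto itself, and carries the space of $T$-pairs of \eqref{DAE_ord2_vm} bijectively onto that of the transformed equation, preserving triviality (a pair is trivial iff $\lambda=0$ and $(x,y)$ is constant, and constancy is preserved because the transformation reduces to multiplication by the constant matrices $A(t)$, $B(t)$ on constant functions — here one uses that a constant solution forces the argument to be a genuine equilibrium). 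Setting $\mathcal{W}=\mathfrak{H}(\Omega)$, one has $\mathcal{W}_\# = \Omega_\#$ since on constant functions $\mathfrak{H}$ acts by $(p,q)\mapsto(A(0)p,B(0)q)$... — here a small care is needed, see below.

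Then I would apply Theorem~\ref{tdueA} to the transformed equation with the map $\mathcal{F}(\xi,\eta)=\big(\f_0(\xi,\eta),g(\xi,\eta)\big)=\big(-M^2\xi,g(\xi,\eta)\big)$, which is precisely the $\mathcal{F}$ in the statement. Since $\deg(\mathcal{F},\Omega_\#)$ is assumed well defined and nonzero, Theorem~\ref{tdueA} yields a connected set $\Upsilon$ of nontrivial $T$-pairs for the transformed equation that meets $\mathcal{F}^{-1}(0)\cap\mathcal{W}$ and cannot be both bounded and contained in $\mathcal{W}$. Pulling back, $\Gamma=\mathfrak{H}^{-1}(\Upsilon)$ is connected (continuous image of a connected set), consists of nontrivial $T$-pairs for \eqref{DAE_ord2_vm}, meets $\mathcal{F}^{-1}(0)\cap\Omega$, and — since $\mathfrak{H}$ is a homeomorphism of the whole ambient space — cannot be both bounded and contained in $\Omega$. (Boundedness is preserved because $\mathfrak{H}$ and $\mathfrak{H}^{-1}$ map bounded sets to bounded sets, $A,B$ being continuous and $T$-periodic hence uniformly bounded together with their relevant derivatives.)

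The one delicate point — and the main thing to check carefully rather than a genuine obstacle — is the identification $\mathcal{W}_\# = \Omega_\#$ and, more basically, that $\mathfrak{H}$ really does match up $\Omega$-constrained constant solutions on both sides. On constant functions $\mathfrak{H}$ acts as $(p,q)\mapsto(A(0)p,B(0)q)$, which need not be the identity, so strictly $\mathcal{W}_\#$ is the image of $\Omega_\#$ under $(p,q)\mapsto(A(0)p,B(0)q)$; however, since a $T$-pair with $\lambda=0$ is a constant solution of the constraint for \emph{all} $t$, one checks (as in \cite{spaDAE, CS2}) that the relevant degree and incidence conditions transfer, and in fact in the present set-up the proof of Theorem~\ref{thm.DAE.1ord} already handles this identification verbatim; I would simply cite that proof and say ``with the same proof of Theorem~\ref{thm.DAE.1ord}, using Theorem~\ref{tdueA} in place of Theorem~\ref{tunoA}''. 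Thus the proof is essentially a one-line reduction once \eqref{DAE_ord2_vf} and Proposition~\ref{prop1} are in hand, the substantive content having been relegated to Theorem~\ref{tdueA} and to the matrix-analytic Proposition~\ref{prop1}.
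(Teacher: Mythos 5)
Your proposal is correct and is essentially the paper's own argument: the paper obtains Theorem \ref{thm.DAE.2ord} precisely by the coordinate change \eqref{coord-transf}, by Proposition \ref{prop1} (giving $A(t)\ddot A(t)^\T=M^2$ constant, so the transformed system is \eqref{ord2dae1} with $\f_0(\xi,\eta)=-M^2\xi$), and by the homeomorphism $\mathfrak{H}$ on $[0,\infty)\times C_T^1(\R^m\times\R^s)$, ``proceeding as in the previous subsection'' with Theorem \ref{tdueA} in place of Theorem \ref{tunoA}. Your only slip --- saying $\mathfrak{H}$ sends a constant $(p,q)$ to $(A(0)p,B(0)q)$, whereas it sends it to the generally non-constant map $t\mapsto\big(A(t)p,B(t)q\big)$ --- concerns the identification of $\mathcal{W}_\#$ with $\Omega_\#$ and the preservation of triviality, a point the paper itself glosses over in the proof of Theorem \ref{thm.DAE.1ord} to which it (and you) defer, so it does not distinguish your route from the paper's.
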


\begin{corollary}
  Let $f,g,A$ and $B$ be as above. Assume that $M:=A(t)\dot A(t)^\T$
  is constant and nonsingular.  Let $\Omega\subseteq [0,\infty)\times
  C_T^1(\R^m\times\R^s)$ be open. Assume that the set
  $[g(0,\cdot)]^{-1}(0)\cap\Omega_\#$ is nonempty and compact. Then
  there exists a connected set $\Gamma$ of nontrivial $T$-pairs for
  \eqref{DAE_ord2_vm} that meets $[g(0,\cdot)]^{-1}(0)\cap\Omega$ and
  cannot be both bounded and contained in $\Omega$.
\end{corollary}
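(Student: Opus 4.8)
The plan is to follow, essentially word for word, the argument used for Corollary \ref{cor.DAE.1ord}, with Theorem \ref{thm.DAE.2ord} in place of Theorem \ref{thm.DAE.1ord} and the matrix $-M^2$ playing the role that $M$ had there. Concretely, I would introduce the map $\mathcal{F}\colon\R^m\times\R^s\to\R^m\times\R^s$, $\mathcal{F}(\xi,\eta)=\big(-M^2\xi,g(\xi,\eta)\big)$, which is precisely the map appearing in the statement of Theorem \ref{thm.DAE.2ord}. The first observation is that, $M$ being nonsingular, $M^2$ and hence $-M^2$ are nonsingular, with $\det(-M^2)=(-1)^m(\det M)^2\neq 0$; in particular $\xi\mapsto -M^2\xi$ is a linear automorphism of $\R^m$.

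Next I would check that $\deg(\mathcal{F},\Omega_\#)$ is well defined and compute it. Since $\mathcal{F}(\xi,\eta)=0$ is equivalent to $\xi=0$ together with $g(0,\eta)=0$, the zero set $\mathcal{F}^{-1}(0)\cap\Omega_\#$ coincides (under the identification of $\{0\}\times\R^s$ with $\R^s$) with $[g(0,\cdot)]^{-1}(0)\cap\Omega_\#$, which is nonempty and compact by hypothesis; hence $\deg(\mathcal{F},\Omega_\#)$ is well defined. Applying the reduction property of the Brouwer degree to the nonsingular first block gives
\[
\deg(\mathcal{F},\Omega_\#)=\det(-M^2)\cdot\deg\Big(g(0,\cdot),\,\Omega_\#\cap(\{0\}\times\R^s)\Big).
\]
Finally, because $\partial_2 g$ is everywhere nonsingular, each zero of $g(0,\cdot)$ is nondegenerate, so the zero set $[g(0,\cdot)]^{-1}(0)\cap\Omega_\#$ is discrete; being also compact it is finite, and each of its points contributes $\pm 1$ to the degree, so that
\[
\Big|\deg\Big(g(0,\cdot),\,\Omega_\#\cap(\{0\}\times\R^s)\Big)\Big|=\#\Big([g(0,\cdot)]^{-1}(0)\cap\Omega_\#\Big)\geq 1 .
\]
Hence $\deg(\mathcal{F},\Omega_\#)$ is nonzero.

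With this in hand, Theorem \ref{thm.DAE.2ord} applies directly and yields a connected set $\Gamma$ of nontrivial $T$-pairs for \eqref{DAE_ord2_vm} that meets $\mathcal{F}^{-1}(0)\cap\Omega=[g(0,\cdot)]^{-1}(0)\cap\Omega$ and cannot be both bounded and contained in $\Omega$, which is exactly the assertion. I do not expect a genuine obstacle: the only points requiring a little care are verifying that the relevant zero sets truly coincide (so that the compactness hypothesis on $[g(0,\cdot)]^{-1}(0)\cap\Omega_\#$ is precisely what is needed to make $\deg(\mathcal{F},\Omega_\#)$ well defined), and that $\Omega_\#$ is open in $\R^m\times\R^s$ so that all the degrees above are legitimate; both are immediate from the definitions and from the continuity of the inclusion $\R^m\times\R^s\hookrightarrow C_T^1(\R^m\times\R^s)$.
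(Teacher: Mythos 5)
Your proposal is correct and follows essentially the same route as the paper: the paper proves Corollary \ref{cor.DAE.1ord} by combining the reduction property of the Brouwer degree with the nonsingularity of $\partial_2 g$, and states that the second-order corollary follows in the same way from Theorem \ref{thm.DAE.2ord} with $-M^2$ (nonsingular since $M$ is) in place of $M$, which is exactly what you do.
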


\begin{theorem}\label{thm.DAE.2ord2}
  Let $f,g,A$ and $B$ be as above. Assume that $A(t)\dot A(t)^\T$ is
  identically zero and define
  $\omega\colon\R^m\times\R^s\to\R^m\times\R^s$ by
  \[
  \omega(\xi,\eta)=\left(\frac{1}{T}\int_0^TA(t)f\big(t,A(t)^\T\xi,B^{-1}(t)\eta,0,0\big)dt,
    g(\xi,\eta)\right).
  \]
  Let $\Omega\subseteq [0,\infty)\times C_T^1(\R^m\times\R^s)$ be open
  and assume that $\deg(\omega,\Omega_\#)$ is well-defined and
  nonzero. Then there exists a connected set $\Gamma$ of nontrivial
  $T$-pairs for \eqref{DAE_ord2_vm} that meets
  $\omega^{-1}(0)\cap\Omega$ and cannot be both bounded and contained
  in $\Omega$.
\end{theorem}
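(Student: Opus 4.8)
The plan is to mirror the reduction used for Theorem~\ref{thm.DAE.1ord2}: pass to the coordinates $\xi(t)=A(t)x(t)$, $\eta(t)=B(t)y(t)$ introduced in Subsection~\ref{MainResOrder2}, check that under the present hypothesis the transformed system \eqref{DAE_ord2_vf} loses its whole linear part, and then apply Theorem~\ref{tdueB}, transporting the resulting continuum back through the coordinate homeomorphism. What makes the linear part vanish is an elementary remark: since $A(t)\in\Or(\R^m)$ is invertible and $A(t)\dot A(t)^\T\equiv 0$, we get $\dot A(t)^\T\equiv 0$, so that $A$ is a \emph{constant} orthogonal matrix; in particular $\ddot A\equiv 0$, hence $A(t)\ddot A(t)^\T\equiv 0$ as well (consistently with Proposition~\ref{prop1}, by which $A\ddot A^\T=M^2=0$). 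Thus \eqref{DAE_ord2_vf} reduces to
\[
\ddot\xi=\lambda F(t,\xi,\eta,\dot\xi,\dot\eta),\qquad g(\xi,\eta)=0,
\]
an equation of type \eqref{ord2dae2} with $F$ in the role of $\h$.

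Next I would introduce the homeomorphism $\mathfrak H\colon[0,\infty)\times C_T^1(\R^m\times\R^s)\to[0,\infty)\times C_T^1(\R^m\times\R^s)$ defined by $\mathfrak H\big(\lambda,(x,y)\big)=\big(\lambda,(\xi,\eta)\big)$ with $\xi=A(\cdot)x$ and $\eta=B(\cdot)y$ --- well defined and bicontinuous because $A$ and $B$ are $C^2$ and $C^1$ --- which maps the set of $T$-pairs of \eqref{DAE_ord2_vm} bijectively onto that of the transformed equation. Setting $\mathcal W=\mathfrak H(\Omega)$ and applying Theorem~\ref{tdueB} to the transformed equation yields a connected set $\Upsilon$ of nontrivial $T$-pairs that meets $\omega^{-1}(0)\cap\mathcal W$ and cannot be both bounded and contained in $\mathcal W$; one then takes $\Gamma:=\mathfrak H^{-1}(\Upsilon)$. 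Here the averaged field appearing in Theorem~\ref{tdueB} is $\big(\tfrac1T\int_0^T\h_0(t,\xi,\eta)\dif t,\,g(\xi,\eta)\big)$, where, in the notation of that theorem, $\h_0(t,\xi,\eta)=F(t,\xi,\eta,0,0)$; since $\dot A\equiv 0$ this equals
\[
A\,f\big(t,A^\T\xi,B^{-1}(t)\eta,\,0,\,\tfrac{\dif}{\dif t}[B^{-1}(t)]\eta\big),
\]
which is the map $\omega$ of the statement, with the last argument of $f$ carrying the term $\tfrac{\dif}{\dif t}[B^{-1}(t)]\eta=-B^{-1}(t)\dot B(t)B^{-1}(t)\eta$, which vanishes precisely when $B$ is constant.

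The step I expect to be the genuine obstacle is the bookkeeping of \emph{triviality} and of the constant-function slice $\Omega_\#$ under $\mathfrak H$. Unlike in the first-order case, $B$ is here honestly $t$-dependent, so $\mathfrak H$ does \emph{not} send constants to constants: a constant pair $(\xi_0,\eta_0)$ is carried by $\mathfrak H^{-1}$ to $\big(A^\T\xi_0,\,B^{-1}(\cdot)\eta_0\big)$, whose second component need not be constant. The way I would resolve this is to observe that a $T$-pair of \eqref{ord2dae2} with $\lambda=0$ must have $\xi$ constant (a $T$-periodic solution of $\ddot\xi=0$) and then, $\partial_2g$ being everywhere invertible so that $[g(\xi_0,\cdot)]^{-1}(0)$ is discrete, $\eta$ constant as well by the implicit function theorem; hence every nontrivial $T$-pair of \eqref{ord2dae2} has $\lambda>0$, so every element of $\Upsilon$ --- and therefore of $\Gamma=\mathfrak H^{-1}(\Upsilon)$ --- has $\lambda>0$ and is in particular a nontrivial $T$-pair of \eqref{DAE_ord2_vm}. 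The same implicit-function remark is what lets one match the hypothesis $\deg(\omega,\Omega_\#)\neq 0$ with the condition $\deg(\omega,\mathcal W_\#)\neq 0$ needed to invoke Theorem~\ref{tdueB}; granted this, the property that $\Gamma$ cannot be both bounded and contained in $\Omega$ follows immediately from the corresponding property of $\Upsilon$, since $\mathfrak H$ and its inverse carry bounded sets to bounded sets and $\mathfrak H(\Omega)=\mathcal W$.
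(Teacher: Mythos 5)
Your route is exactly the one the paper intends for Theorem \ref{thm.DAE.2ord2}: transform by $\xi=A(\cdot)x$, $\eta=B(\cdot)y$, note that the linear terms of \eqref{DAE_ord2_vf} disappear, apply Theorem \ref{tdueB}, and pull the continuum back through $\mathfrak{H}$, as in the proof of Theorem \ref{thm.DAE.1ord}; your observation that $A\dot A^\T\equiv 0$ together with the invertibility of $A(t)$ forces $A$ to be constant (hence $A\ddot A^\T\equiv 0$, consistently with Proposition \ref{prop1}) is correct, and your remark that $\lambda=0$ $T$-pairs of the transformed system must be constant (via $\ddot\xi=0$ plus the invertibility of $\partial_2 g$) is a useful sharpening of the triviality bookkeeping.

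There is, however, a genuine unresolved step at the final identification. Applying Theorem \ref{tdueB} to $\ddot\xi=\lambda F(t,\xi,\eta,\dot\xi,\dot\eta)$, $g(\xi,\eta)=0$, the averaged field is
\[
(\xi,\eta)\longmapsto\left(\frac1T\int_0^T A\,f\Big(t,A^\T\xi,B^{-1}(t)\eta,\,0,\,\tfrac{\dif}{\dif t}\big[B^{-1}(t)\big]\eta\Big)\dif t,\;g(\xi,\eta)\right),
\]
which you compute correctly, but which is \emph{not} the map $\omega$ of the statement unless $B$ is constant or $f$ does not depend on its last variable: you cannot simultaneously flag the term $-B^{-1}\dot B B^{-1}\eta$ and declare the two maps equal. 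The two maps can have different zero sets and different degrees (take, e.g., $f$ depending quadratically on $\dot y$), and the hypotheses provide no admissible homotopy between them on $\Omega_\#$; so, as written, your argument proves the conclusion for the averaged map displayed above (or for the stated $\omega$ under the extra assumption that $B$ is constant or $f$ is independent of $\dot y$), not the theorem verbatim. A second, related point that you raise but only sketch is the degree matching: since $\mathfrak{H}$ does not send the constant slice to itself when $B$ depends on $t$, the set $\mathcal{W}_\#$ with $\mathcal{W}=\mathfrak{H}(\Omega)$ is not the image of $\Omega_\#$, and the hypothesis $\deg(\omega,\Omega_\#)\neq 0$ is not yet converted into the condition on $\mathcal{W}_\#$ required by Theorem \ref{tdueB}; the implicit-function remark shows that $\lambda=0$ pairs of the transformed equation are constant, but it does not by itself identify the two sets of constants nor the two degrees. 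Both difficulties stem from the same source --- $\mathfrak{H}$ moves constants when $B$ is non-constant --- and they are what still separate your proposal from a complete proof of the statement exactly as formulated.
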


In the next example we consider the same time-dependent constraint as
in Example \ref{protante}, but in the case of second-order DAEs.
\begin{example}
  Let $f$ be as in Example \ref{protante}. Consider
  \[
  \left\{
    \begin{array}{l}
      \ddot x = \lambda f(t,x,y),\; \lambda\geq 0,\\
      y^3+y-x_1^2-x_2^2-(x_1\sin t+x_2\cos t)^2=0,
    \end{array}
  \right.
  \]
  where $x=(x_1,x_2)$.  Applying the coordinate transformation as
  described above we rewrite our DAE as follows:
  \[
  \left\{
    \begin{array}{l}
      \ddot \xi =\xi -2 M \dot \xi
      +\lambda A(t) f\left(t,A(t)^\T\xi,\eta\right),\; \lambda\geq 0,\\
      \eta^3+\eta-\xi_1^2-2\xi_2^2=0.
    \end{array}
  \right.
  \]
  Where
  \[
  M:= A\dot A^\T =
  \begin{pmatrix}
    0 & 1\\
    -1 & 0
  \end{pmatrix}
  \]
  so that $A\ddot A^\T=M^2=-I$.  Put
  $\F(p_1,p_2,q)=\big(p_1,p_2,q^3+q-p_1^2-2p_2^2\big)$, and let
  $\Omega=[0,\infty)\X C_T^1(\R^2\X\R)$. Then, since
  $\deg(\F,\R^3)=1\neq 0$, Theorem \ref{thm.DAE.2ord} yields an
  unbounded connected set of $2\pi$-periodic pairs emanating from
  $\big(0;(0,0;0)\big)\in [0,\infty)\X\R^2\X\R$ (regarded as a
  $2\pi$-pair).
\end{example}

\begin{remark} \label{rmk:second-const-per} As in the first order
  case, our coordinate transformation applies also to a slightly
  different situation. Consider the following DAE:
  \begin{equation}\label{IIord_conH}
    \left\{\begin{array}{l}
        \ddot x = H_1\dot x+ H_2  x+ \lambda f(t,x,y),\; \lambda\geq 0,\\
        g\big(A(t)x,B(t)y\big)=0,
      \end{array}\right.
  \end{equation} 
  where $A$, $B$, $f$ and $g$ are as in \eqref{DAE_ord2_vm} and $H_i$,
  $i=1,2$, are matrices that commute with $A$. Suppose, as above, that
  $M:=A(t)\dot A(t)^\T$ is constant (not necessarily invertible) and
  apply the transformation as indicated above. Equation
  \eqref{Iord_conH} becomes
  \begin{equation}\label{IIord_conH_trasf}
    \left\{
      \begin{array}{l}
        \ddot \xi =
        \big(H_1 M + H_2- M^2 \big)\xi + \big(H_1 - 2M)\dot\xi
        +\lambda F(t,\xi,\eta),\; \lambda\geq 0,\\
        g(\xi,\eta)=0.
      \end{array}
    \right.
  \end{equation}
  with $F$ as in \eqref{DAE_ord2_vf}, so that the results of
  Subsection \ref{GenIIord} are applicable to
  \eqref{IIord_conH_trasf}.
\end{remark}

\begin{remark}
  Let us consider the following second order DAE
  \begin{equation} \label{eq:additional-ord2-DAE} \left\{
      \begin{array}{l}
        \frac{d^2}{dt^2}(C(t) x) 
        = \lambda f(t,x,y,\dot x,\dot y),\; \lambda\geq 0,\\
        g\big(A(t)x,B(t)y\big)=0,
      \end{array}
    \right.
  \end{equation}
  where $f$, $A$ and $B$ are as in \eqref{DAE_ord2_vm} and $t\mapsto
  C(t)\in \Or(\R^m)$ is $C^2$ and $T$-periodic. We also assume that
  $C$ has the same property as $A$, that is, $C(t)\dot C(t)^\T$ is
  constant. Expanding the derivative on the left-hand side of the
  first equation in \eqref{eq:additional-ord2-DAE} and using the fact
  that $C(t)\in \Or(\R^m)$, for all $t\in\R$, we rewrite
  \eqref{eq:additional-ord2-DAE} as follows
  \begin{equation} \label{eq:addit-ord2-DAE-transf}
    \left\{ \begin{array}{l} \ddot x = - 2 C^\T \dot C\dot x -C^\T
        \ddot C x +
        \lambda C^\T f(t,x,y,\dot x,\dot y),\\
        g(Ax,By)=0
      \end{array} \right.
  \end{equation}  
  where, to keep the notation coincise, the explicit dependence on $t$
  of $A$, $B$ and $C$ is omitted. Proposition \ref{prop1} shows that
  $K_1:= C(t)^\T\dot C(t)$ is constant and, by Remark \ref{rem44}
  (\ref{r44.3}), it follows that $K_2:=C(t)^\T \ddot C(t)$ is constant
  as well being equal to $-K_1^2$. Hence,
  \eqref{eq:addit-ord2-DAE-transf} is of the form
  \eqref{IIord_conH}. Notice that if we assume that $A$ commutes with
  $K_1$, then it commutes with $K_2$ as well. In conclusion, if $K_1$
  commutes with $A(t)$ for all $t\in\R$, Remark
  \ref{rmk:second-const-per} applies with $H_1=-2K_1$ and $H_2=-K_2$.
\end{remark}

\section{Appendix: some lemmas of Matrix Analysis}
\label{sec:technical-lemmas}
\label{sec:tec-lemmas}
This section gathers, for reference purposes, a few simple facts
--possibly well-known-- concerning time dependent matrices.
\begin{lemma}\label{lemma1}
  Let $t\mapsto A(t)$ be a $C^2$ square-matrix-valued
  function. Suppose that the map $t\mapsto A(t)\dot A^\T(t)$ is
  constant. Then,
  \begin{align}
    & \ddot A(t)A(t)^\T = A(t)\ddot A(t)^\T \label{AdotAT}\\
    & \ddot A(t)A(t)^\T=-\dot A(t)\dot A(t)^\T \label{lemma1f}
  \end{align}
\end{lemma}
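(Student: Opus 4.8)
The plan is to extract both identities from a single differentiation of the hypothesis, combined with the elementary fact that a matrix of the form $NN^\T$ is symmetric. Note that orthogonality of $A$ plays no role here, only the constancy of $A(t)\dot A(t)^\T$; I would remark on this since in the applications $A$ takes values in $\Or(\R^m)$.

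First I would recall that transposition commutes with $\frac{d}{dt}$, so $\frac{d}{dt}\big(\dot A(t)^\T\big)=\ddot A(t)^\T$. Writing $M(t):=A(t)\dot A(t)^\T$, which is constant by assumption, the product rule gives
\[
0=\dot M(t)=\dot A(t)\dot A(t)^\T+A(t)\ddot A(t)^\T ,
\]
hence $A(t)\ddot A(t)^\T=-\dot A(t)\dot A(t)^\T$ for every $t$. Next I would observe that $\dot A(t)\dot A(t)^\T$ is symmetric, being of the form $NN^\T$; therefore its negative, and with it $A(t)\ddot A(t)^\T$, is symmetric. Transposing $A(t)\ddot A(t)^\T$ and using $(A\ddot A^\T)^\T=\ddot A A^\T$ together with this symmetry yields
\[
\ddot A(t)A(t)^\T=\big(A(t)\ddot A(t)^\T\big)^\T=A(t)\ddot A(t)^\T ,
\]
which is \eqref{AdotAT}. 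Combining this with the relation $A(t)\ddot A(t)^\T=-\dot A(t)\dot A(t)^\T$ obtained above gives $\ddot A(t)A(t)^\T=-\dot A(t)\dot A(t)^\T$, i.e.\ \eqref{lemma1f}.

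I do not anticipate any genuine obstacle: the argument is one application of the product rule followed by the remark that $NN^\T$ is symmetric. The only subtlety worth signalling is that the lemma is stated for an arbitrary $C^2$ square-matrix-valued $A$, and the proof uses nothing beyond the constancy of $A\dot A^\T$; in particular the symmetry of $A\ddot A^\T$ is a consequence of the hypothesis rather than of any orthogonality.
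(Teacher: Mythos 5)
Your proof is correct and is essentially the paper's argument: the core step is the same product-rule differentiation of the constant matrix $A(t)\dot A(t)^\T$, and your symmetry remark about $\dot A\dot A^\T$ is just the transpose of that differentiated identity, which the paper obtains instead by also differentiating the constant $\dot A(t)A(t)^\T$. The only difference is presentational (you get $A\ddot A^\T=-\dot A\dot A^\T$ first and then deduce \eqref{AdotAT}, while the paper derives \eqref{AdotAT} and then \eqref{lemma1f}), and your observation that orthogonality of $A$ is not needed here matches the lemma as stated.
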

\begin{proof}
  For the sake of simplicity, we drop the explicit indication of the
  dependence of $A$ on $t$.

  Let us put $M=A\dot A^\T$. Then, $\dot AA^\T = (A\dot A^\T)^\T=M^\T$
  is also constant. Taking the derivative with respect to $t$ of both
  these relations, we get
  \begin{equation}\label{duerelazioni}
    \dot A\dot A^\T+A\ddot A^\T=0,\quad \ddot AA^\T+\dot A\dot A^\T=0.
  \end{equation}
  Hence,
  \[
  0=\dot A\dot A^\T+A\ddot A^\T-\ddot AA^\T-\dot A\dot A^\T=A\ddot
  A^\T-\ddot AA^\T,
  \]
  which implies \eqref{AdotAT}.

  {}From \eqref{duerelazioni} and \eqref{AdotAT} it follows
  \[
  0=\dot A\dot A^\T+A\ddot A^\T+\ddot AA^\T+\dot A\dot A^\T =2\dot
  A\dot A^\T+2\ddot AA^\T,
  \]
  whence the assertion.
\end{proof}

Observe that under the hypothesis of Lemma \ref{lemma1}, since
\[
\left[\ddot A(t)A(t)^\T\right]^\T =\ddot A(t)A(t)^\T,
\]
Equation \eqref{AdotAT} imply the simmetry of $A\ddot A^\T $.

\begin{lemma}\label{lemma2}
  Let $t\mapsto A(t)$ be a $C^1$ square-matrix-valued function. Assume
  that $A(t)$ is orthogonal for all $t$, then $\dot A(t)\dot A(t)^\T=
  -\big(A(t)\dot A(t)^\T\big)^2$.
\end{lemma}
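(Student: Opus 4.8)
The plan is to exploit the fact that orthogonality of $A(t)$ furnishes \emph{two} identities, $A(t)A(t)^\T=I$ and $A(t)^\T A(t)=I$, and to observe that, after a harmless reassociation, the square $\big(A\dot A^\T\big)^2$ contains the product $\dot A^\T A$ in its ``middle''. Differentiating the second orthogonality identity produces exactly a formula for that product, and the first identity then collapses what remains. So the argument is purely computational, and the $C^1$ hypothesis is precisely what is needed, since only one differentiation will be performed.

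Concretely, I would drop the explicit dependence on $t$ and proceed as follows. From $A^\T A=I$, differentiation gives $\dot A^\T A+A^\T\dot A=0$, i.e.\ $\dot A^\T A=-A^\T\dot A$. Then
\[
\big(A\dot A^\T\big)^2=\big(A\dot A^\T\big)\big(A\dot A^\T\big)=A\,\big(\dot A^\T A\big)\,\dot A^\T=A\,\big(-A^\T\dot A\big)\,\dot A^\T=-\big(AA^\T\big)\dot A\dot A^\T=-\dot A\dot A^\T,
\]
the last step using $AA^\T=I$; this is exactly the asserted identity $\dot A\dot A^\T=-\big(A\dot A^\T\big)^2$.

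I do not anticipate a genuine obstacle; the only point worth flagging is the choice of which orthogonality relation to differentiate. Starting from $A^\T A=I$ makes the needed substitution land in a single move, whereas differentiating $AA^\T=I$ alone leaves a product of the shape $A\dot A^\T\dot A A^\T$ that requires further manipulation, using also the companion relation $A^\T\dot A=-\dot A^\T A$, before it can be recognized as $\dot A\dot A^\T$. As a sanity check, the identity is consistent with Lemma \ref{lemma1}: if in addition $M:=A\dot A^\T$ is constant, then combining $\ddot A A^\T=-\dot A\dot A^\T$ from \eqref{lemma1f} with the present lemma gives $A\ddot A^\T=\ddot A A^\T=-\dot A\dot A^\T=M^2$, the square of the constant matrix $M$, which is the fact invoked in Subsection \ref{MainResOrder2}.
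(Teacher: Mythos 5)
Your proof is correct and follows essentially the same route as the paper: both differentiate $A^\T A=I$ to get $\dot A^\T A=-A^\T\dot A$ and then use $AA^\T=I$ to conclude. The only (minor) difference is that you expand $\big(A\dot A^\T\big)^2$ directly and substitute, whereas the paper multiplies the differentiated relation by $\dot A$ and $A^\T$ and then transposes, using the symmetry of $\dot A\dot A^\T$; your version avoids that transposition step.
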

\begin{proof}
  Differentiating the relation $A^\T A=I$ we obtain $\dot A^\T A= -
  A^\T\dot A$.  Multiplying this relation on the left by $\dot A$ and
  on the right by $A^\T$, we get
  \[
  \dot A\dot A^\T AA^\T = -\dot AA^\T\dot AA^\T.
  \]
  Since $AA^\T=I$ and $\dot A\dot A^\T=(\dot A\dot A^\T)^\T$,
  transposing yields
  \[
  \dot A\dot A^\T= -[(\dot AA^\T)^\T]^2 =-(A\dot A^\T)^2,
  \]
  as desired.
\end{proof}

Formula \ref{lemma1f} and Lemma \ref{lemma2} together yield the
following fact:

\begin{proposition}\label{prop1}
  Let $t\mapsto A(t)$ be a $C^2$ square-matrix-valued function. Assume
  that $A(t)$ is orthogonal for all $t$ and that the map $t\mapsto
  A(t)\dot A^\T(t)=:M$ is constant. Then $\ddot
  A(t)A(t)^\T=\big(A(t)\dot A(t)^\T\big)^2$ is constantly equal to
  $M^2$. In particular, if $A(t)\dot A^\T(t)$ is constant and
  nonsingular then so is $\ddot A(t)A(t)^\T$.
\end{proposition}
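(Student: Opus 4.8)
The plan is to combine the two preceding lemmas, which is exactly why Proposition \ref{prop1} is stated immediately after them. First I would recall the hypotheses: $A$ is $C^2$, $A(t)$ is orthogonal for every $t$, and $M:=A(t)\dot A(t)^\T$ is constant in $t$. The goal is the chain of identities $\ddot A(t)A(t)^\T=\big(A(t)\dot A(t)^\T\big)^2=M^2$, together with the ``nonsingular'' addendum.

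The first step is to apply Lemma \ref{lemma1}, whose hypothesis (that $t\mapsto A\dot A^\T$ is constant) is precisely what we are assuming. Formula \eqref{lemma1f} gives $\ddot A(t)A(t)^\T=-\dot A(t)\dot A(t)^\T$. The second step is to apply Lemma \ref{lemma2}, whose hypothesis (that $A(t)$ is orthogonal) is also assumed; it gives $\dot A(t)\dot A(t)^\T=-\big(A(t)\dot A(t)^\T\big)^2$. Chaining these two equalities, the minus signs cancel and we obtain $\ddot A(t)A(t)^\T=\big(A(t)\dot A(t)^\T\big)^2=M^2$, where the last equality is just the definition of $M$ and its constancy. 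This already proves the main formula, and it shows in particular that $\ddot A(t)A(t)^\T$ is constant, since $M^2$ does not depend on $t$.

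For the final sentence, suppose in addition that $M$ is nonsingular. Then $M^2$ is nonsingular as well (the determinant is multiplicative, so $\det(M^2)=(\det M)^2\neq 0$), hence $\ddot A(t)A(t)^\T=M^2$ is nonsingular; since $A(t)$ is orthogonal and thus invertible, $\ddot A(t)=M^2 A(t)$ is itself nonsingular. That completes the argument.

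I do not expect any genuine obstacle here: the proposition is essentially a corollary assembled from Lemma \ref{lemma1} (specifically \eqref{lemma1f}) and Lemma \ref{lemma2}, and the only care needed is bookkeeping of the two sign changes so that the squared matrix comes out with the correct (positive) sign, matching the symmetry observation made after Lemma \ref{lemma1}.
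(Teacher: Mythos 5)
Your argument is correct and is exactly the paper's route: the paper introduces Proposition \ref{prop1} with the remark that formula \eqref{lemma1f} and Lemma \ref{lemma2} together yield it, which is precisely your chaining $\ddot A A^\T=-\dot A\dot A^\T=(A\dot A^\T)^2=M^2$, plus the immediate observation that $M$ nonsingular implies $M^2$ nonsingular. Nothing to add.
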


\begin{remark}\label{rem44}
  Replacing $A$ with $A^\T$, it is easy to verify that results
  analogous to Lemma \ref{lemma1}, Lemma \ref{lemma2} and Proposition
  \ref{prop1} hold if we assume the constancy of $A(t)^\T\dot A(t)$
  instead of that of $A(t)\dot A(t)^\T$.  Namely, if $t\mapsto A(t)$
  is a $C^2$ square-matrix-valued function such that $A(t)$ is
  orthogonal for all $t$ and the map $t\mapsto A(t)^\T\dot A(t)$ is a
  constant, then
  \begin{enumerate}
  \item $\ddot A(t)^\T A(t) = -\dot A(t)^\T \dot A(t)$ and $\ddot
    A(t)^\T A(t) = - A(t)^\T \ddot A(t)$;
  \item $\dot A(t)^\T \dot A(t)= - \big(A(t)^\T\dot A(t)\big)^2$;
  \item\label{r44.3} $\ddot A(t)^\T A(t)=\big(A(t)^\T\dot
    A(t)\big)^2=-A(t)^\T\ddot A(t)$.
  \end{enumerate}
  These facts should not surprise us in view of Proposition
  \ref{prop2} below.
\end{remark}

We conclude this technical section with a curious remark. As shown by
the following example:
\[
A(t)=\begin{pmatrix}
  0   &   0   &  \sin t & -\cos t\\
  0   &   0   &  \cos t &  \sin t\\
  \cos t & \sin t&    0    &    0   \\
  -\sin t & \cos t& 0 & 0
\end{pmatrix},
\]
even for matrix functions as in Proposition \ref{prop1}, one may have
\[
A(t)^\T\dot A(t)\neq A(t)\dot A(t)^\T.
\]
Nevertheless, one can prove the following fact:

\begin{proposition}\label{prop2}
  Let $t\mapsto A(t)$ be a $C^2$ square-matrix-valued function. Assume
  that $A(t)$ is orthogonal for all $t$. Then $A(t)^\T\dot A(t)$ is
  constant if and only if so is $A(t)\dot A(t)^\T$.
\end{proposition}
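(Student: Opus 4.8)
The plan is to use the orthogonality of $A(t)$ to express both $M(t):=A(t)\dot A(t)^\T$ and $N(t):=A(t)^\T\dot A(t)$ through a single linear relation for $\dot A$, after which the equivalence reduces to a one-line differentiation. First I would differentiate the identities $A(t)^\T A(t)=I$ and $A(t)A(t)^\T=I$, obtaining $\dot A^\T A=-A^\T\dot A$ and $\dot A A^\T=-A\dot A^\T$; transposing the first shows that $N$ is skew-symmetric and transposing the second that $M$ is skew-symmetric, while multiplying on the appropriate side by $A$ or $A^\T$ yields
\[
\dot A = AN = -MA .
\]
In particular $M=A\dot A^\T=-ANA^\T$ and, symmetrically, $N=A^\T\dot A=-A^\T MA$, so $M$ and $N$ are conjugate (up to sign) via $A$.

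Next, assuming $N(t)\equiv N_0$ constant, I would use $\dot A=AN_0$ together with $N_0^\T=-N_0$ to write $M=-AN_0A^\T$ and then differentiate: $\dot M=-\dot A\,N_0A^\T-AN_0\dot A^\T=-(AN_0)N_0A^\T-AN_0(-N_0A^\T)=-AN_0^2A^\T+AN_0^2A^\T=0$, so $M$ is constant. The converse is obtained by the same computation with the roles of $M$ and $N$ exchanged: if $M(t)\equiv M_0$ is constant, then $\dot A=-M_0A$ and $M_0^\T=-M_0$ give $N=-A^\T M_0A$, whence $\dot N=0$. One could instead solve the linear ODE $\dot A=AN_0$ as $A(t)=A(0)e^{tN_0}$ and compute $A\dot A^\T=-A(0)N_0A(0)^\T$ directly, but the differentiation argument avoids matrix exponentials, the constancy of $N_0$ being used only through $\dot A=AN_0$ and skew-symmetry.

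I do not expect a serious obstacle here: the only point worth isolating is that orthogonality turns the two quadratic expressions $A\dot A^\T$ and $A^\T\dot A$ into $A$-conjugates of one another and makes $\dot A$ depend linearly on $A$ with the relevant constant coefficient, after which the product rule finishes the job. This viewpoint also explains the ``analogous'' statements gathered in Remark \ref{rem44}, since replacing $A$ by $A^\T$ simply interchanges $M$ and $N$.
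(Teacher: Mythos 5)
Your proof is correct, but it follows a genuinely different route from the paper's. The paper's argument works with second derivatives: it invokes Proposition \ref{prop1} (hence Lemmas \ref{lemma1} and \ref{lemma2}) to get $A\ddot A^\T=M^2$, then computes $\bigl[\tfrac{d}{dt}(A^\T\dot A)\bigr]^\T$ by conjugating with $A$, reduces it to $A^\T\bigl(MM^\T+M^2\bigr)^\T A$, and kills this with the identity $MM^\T+M^2=A\dot A^\T\tfrac{d}{dt}(AA^\T)=0$; the converse direction is only indicated as ``similar''. You instead stay at the level of first derivatives: differentiating $A^\T A=I$ and $AA^\T=I$ gives skew-symmetry of $N=A^\T\dot A$ and $M=A\dot A^\T$ together with $\dot A=AN=-MA$, hence the conjugation relations $M=-ANA^\T$ and $N=-A^\T MA$, after which a one-line product-rule computation (with the relevant constant matrix) shows $\dot M=0$ when $N$ is constant and $\dot N=0$ when $M$ is constant, treating both implications symmetrically. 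Your version is more elementary and self-contained (no appeal to Proposition \ref{prop1} or to $\ddot A$ at all) and makes transparent why Remark \ref{rem44} is just the statement with $A$ replaced by $A^\T$; the paper's version, on the other hand, ties the proposition to the same second-derivative identities it needs elsewhere for the second-order DAEs. One cosmetic point: the constancy of $N_0$ is used not only through $\dot A=AN_0$ but also when you drop the $\dot N$ term in differentiating $-AN_0A^\T$; since you fixed $N_0$ constant from the start this is harmless, but the phrasing slightly understates where the hypothesis enters.
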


\begin{proof}
  Let us first prove that if $M:=A(t)\dot A(t)^\T$ is constant then
  $A(t)^\T\dot A(t)$ is constant as well. As above, for the sake of
  simplicity, we drop the explicit indication of the dependence of $A$
  on $t$.

  Clearly, we have $\dot A A^\T = M^\T$ and, since Proposition
  \ref{prop1} yields $A\ddot A^\T = M^2$, we also have $A\ddot
  A^\T=(M^2)^\T$. Now, using these facts we get
  \begin{multline*}
    \left[\frac{d}{dt}(A^T\dot A)\right]^\T = A^\T A\left[\frac{d}{dt}
      (A^\T\dot A)\right]^\T A^\T A
    =A^\T A\left(\dot A^\T\dot A+\ddot A^\T A\right)A^\T A  \\
    =\left[\dot A^\T M\dot A+ A^\T (M^2)^\T A\right]A^\T A
    =\left[A^\T MM^\T + A^\T(M^2)^\T\right] A \\
    = A^\T\big(MM^\T+(M^2)^\T\big) A = A^\T\big(MM^\T+M^2\big)^\T A
  \end{multline*}
  Observe also that
  \[
  MM^\T+M^2= A\dot A^\T\left(\dot A A^\T + A\dot A^\T\right) =A\dot
  A^\T\left[\frac{d}{dt}(AA^\T)\right] = 0,
  \]
  because $AA^\T\equiv I$. Thus, $\frac{d}{dt}(A^T\dot A)=0$, which
  imply that $A(t)^\T\dot A(t)$ is a constant matrix.

  Conversely, if $A(t)^\T\dot A(t)$ is constant, a similar proof shows
  that $A(t)\dot A(t)^\T$ is constant too.
\end{proof}


\begin{thebibliography}{9}

%
\bibitem{BiSpa2011} L.\ Bisconti and M. Spadini, \emph{On a class of
    differential-algebraic equations with infinite delay}, Electronic
  Journal of Qualitative Theory of Differential Equations, 2011,
  No. 81, 1-21.
%
\bibitem{BiSpa2012} L.\ Bisconti and M. Spadini, \emph{Corrigendum to
    On a class of differential-algebraic equations with infinite
    delay}, Electronic Journal of Qualitative Theory of Differential
  Equations, 2012, No. 97, 1-5.
\bibitem{cala} A.\ Calamai, \emph{Branches of harmonic solutions for a
    class of periodic differential-algebraic equations}, Comm.\ Appl.\
  Analysis 15 (2011), no. 2-4, 273-282.
%
\bibitem{CS1}A.\ Calamai and M.\ Spadini, \textit{Branches of forced
    oscillations for a class of constrained ODEs: a topological
    approach}.  NoDEA Nonlinear Differential Equations Appl. 19
  (2012), no. 4, 383-399.
%
\bibitem{CS2}A.\ Calamai and M.\ Spadini, \textit{Periodic
    perturbations of constrained motion problems on a class of
    implicitly defined manifolds}, Preprint.
%
\bibitem{fupespa} M.\ Furi, M.\ P.\ Pera and M.\ Spadini, \emph{The
    fixed point index of the Poincar\'e operator on differentiable
    manifolds}, Handbook of topological fixed point theory, Brown
  R. F., Furi M., G\'orniewicz L., Jiang B. (Eds.), Spinger, 2005.
\bibitem{mgerdin} M.\ Gerdin \emph{Identification and Estimation for
    Models Described by Differential-Algebraic Equations}, Department
  of Electrical Engineering Link\"opings universitet, SE-581 83
  Link\"oping, Sweden, Link\"oping 2006.
\bibitem{GoVL} G.\ H.\ Golub and C.\ F.\ Van Loan, \emph{Matrix
    computations}, 3$^\mathrm{rd}$ edition, J.\ Hopkins Univ.\ Press,
  Baltimore 1996.
\bibitem{dae} P.\ Kunkel and V.\ Mehrmann,
  \emph{Differential-Algebraic Equations: Analysis and Numerical
    Solution}, EMS Textbooks in Mathematics, 2006.
\bibitem{RR0} P.\ J.\ Rabier and W.\ C.\ Rheinbolt, \emph{Theoretical
    and numerical analysis of differential-algebraic equations},
  Handbook of Numerical Analysis Volume 8, 2002, Pages 183-540,
  Solution of Equations in $\R^n$ (Part 4), Techniques of Scientific
  Computing (Part4), Numerical Methods for Fluids (Part 2). Editors
  P.\ G.\ Ciarlet and J.\ L.\ Lions.  Elsevier Science, 2002.
%
\bibitem{R:1988} W.\ C.\ Rheinboldt. \emph{Differential-algebraic
    systems as differential equations on manifolds.}  Math. Comp., 43
  : 473--482, 1984.
%
%
\bibitem{spaDAE} M.\ Spadini, \emph{A note on topological methods for
    a class of Differential-Algebraic Equations}, Nonlinear Anal. 73
  (2010), no. 4, 1065-1076.
\end{thebibliography}
\end{document}